\documentclass[11pt,a4paper]{article}

\usepackage{a4wide}
\usepackage[latin1]{inputenc}

\usepackage{amsfonts}
\usepackage{amsmath}
\usepackage{amssymb}
\usepackage{amsthm}

\usepackage{verbatim} 

\usepackage{color}

\binoppenalty=10000 
\relpenalty=10000
\hyphenpenalty=10000
\exhyphenpenalty=10000

\newtheorem{Theorem}{Theorem}
\newtheorem{Lemma}[Theorem]{Lemma}

\newtheorem{Proposition}[Theorem]{Proposition}
 \theoremstyle{Definition}
\newtheorem{Definition}[Theorem]{Definition}

\def\cc#1{\mathcal{#1}}

\newcommand{\N}{{\mathbb N}}

\newcommand{\floor}[1]{\lfloor #1 \rfloor }

\newcommand{\D}[1]{D^{\tiny \text{\it #1}}}

\newcommand{\cf}{\!\text{\it cf}}

\newcommand{\start}{\text{\it start}}
\renewcommand{\ldots}{ ... }

\begin{document}

\title{On absolutely normal and continued fraction normal numbers}

\author{\begin{tabular}{lcr}
Ver\'onica Becher & \hspace*{1cm}&Sergio A. Yuhjtman
\end{tabular}}

\maketitle

\begin{abstract}
We give a construction of a real number that is normal to all integer bases and  
continued fraction normal. The computation of the first $n$ digits of  its 
continued fraction expansion performs in the order of $n^4$ mathematical operations.   
The construction works by defining successive refinements of appropriate subintervals 
to achieve, in the limit, simple normality to all integer bases and continued fraction normality. 
The main difficulty is to  control the length of these subintervals. 
To achieve this we adapt  and combine known metric theorems 
on continued fractions  and on expansions in integers bases.
\end{abstract}

\bigskip
\bigskip

For a real number  $x$ in the unit interval, 
the continued fraction expansion of $x$
is a sequence  of positive integers  $ a_1, a_2, \ldots $,
 such that
\[
x= 
\cfrac{1}{a_1 + \cfrac{1}{a_2 + \cfrac{1}{\ddots \, + \cfrac{1}{a_n + \cfrac{1}{\ddots} }   }}}
\]
A real number is continued fraction normal 
if  every block of  integers occurs in the continued fraction expansion 
with  the asymptotic frequency determined by  the Gauss measure.
An application of Birkhoff's Ergodic Theorem 
proves that almost all --with respect to Lebesgue measure--
real numbers  have a  normal continued fraction expansion.

For each real number $x$ in the unit interval, 
its expansion   in an integer  base $b$ greater than or equal to~$2$
 is a sequence of integers $a_1, a_2 \ldots$, where    $0\leq a_i<b$ for every $i$,
such that   
\[
x = \sum_{i=1}^{\infty} a_i b^{-i}.
\] 
We require that $a_i<b-1$ infinitely many times to  ensure 
that every rational number has a unique representation.
A real number  $x$  is  simply normal to a given base $b$ if 
every    digit occurs in the $b$-ary expansion of $x$  with the same asymptotic frequency.
Normality to base $b$ is defined as simple normality to $b, b^2, b^3, \ldots$, all the powers of $b$. 
In 1940 Pillai proved that this formulation of  normality  
is equivalent to Borel's original definition in~\cite{Borel1909}
(see~\cite[Theorem~4.2]{Bugeaud2012}).
Absolute normality is defined as  normality to every integer base greater than or equal to~$2$,
hence, as  simple normality to every integer base greater than or equal to~$2$.
Borel  showed that  almost all (with respect to Lebesgue measure) real numbers
 are absolutely normal.

\pagebreak
Thus, almost all real numbers are absolutely normal and continued fraction normal.
\linebreak
Here we prove the following:

\begin{Theorem}\label{thm}
There is an algorithm that computes a number that is absolutely normal and continued fraction normal.
The computation of the first~$n$ digits of the continued fraction expansion
performs a number of mathematical operations  that is in~$O(n^4)$. 
\end{Theorem}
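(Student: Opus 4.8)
The plan is to build the number by a classical nested–interval construction, refining the search at each stage so as to enforce one more finitary consequence of absolute normality or of continued fraction normality, all the while keeping quantitative control on the lengths of the intervals produced. First I would reduce the two normality notions to a single countable list of conditions. By Pillai's theorem, absolute normality is equivalent to simple normality to every base $b\ge 2$, i.e.\ to the conditions $C_b$: ``each of the $b$ digits occurs in the base-$b$ expansion with asymptotic frequency $1/b$''; and continued fraction normality is equivalent to the conditions $C'_w$, one for each finite word $w$ of positive integers: ``$w$ occurs in the continued fraction expansion with Gauss frequency $\mu(w)$''. Enumerate all of these as $G_1, G_2, \dots$. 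For each $G_j$ and each $\varepsilon>0$ there is a horizon $N=N(j,\varepsilon)$ and a deviation event, and the crucial quantitative input is that the Lebesgue measure of the set of $x$ whose empirical frequency after $N$ steps deviates from the target by more than $\varepsilon$ is small, and remains small after rescaling when $[0,1)$ is replaced by an arbitrary basic interval: a $b$-adic cylinder for the conditions $C_b$, a continued fraction cylinder for the conditions $C'_w$. For base-$b$ digits this is a Bernstein/Chernoff estimate for the nearly independent digit functions; for the Gauss map it follows from exponential mixing of the Gauss measure — a large-deviation bound of Lévy/Philipp type applied to the indicator of $w$ — combined with the bounded distortion of continued fraction cylinders, which makes the conditional measures uniformly comparable to the unconditional ones.

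The construction then proceeds in stages. At stage $k$ I maintain an interval $I_k$, presented as a continued fraction cylinder so that the CF digits fixed so far can be read off directly. Given $I_k$, to handle $G_{k+1}$ I fix a tolerance $\varepsilon_{k+1}$, a long enough horizon, and I search for a subinterval $J\subseteq I_k$ — again a continued fraction cylinder, refined just enough — such that every $x\in J$ already has its stage-$(k+1)$ empirical frequencies within $\varepsilon_{k+1}$ of target and such that $|J|\ge c_k\,|I_k|$ for a controlled constant $c_k$. The existence of such a $J$ is exactly where the metric theorems enter: the ``bad'' set inside $I_k$ has conditional measure less than, say, $1/3$, so a definite fraction of $I_k$ is good, and bounded distortion lets me extract from the good part a single CF cylinder whose length is a definite fraction of $|I_k|$. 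To keep the $b$-adic conditions under control while $I_k$ is a continued fraction cylinder, and conversely, I interleave the two kinds of refinement and invoke the comparison between the two cylinder structures. Setting $I_{k+1}=J$ and $\alpha=\bigcap_k I_k$, the point $\alpha$ satisfies every $G_j$, hence is absolutely normal and continued fraction normal.

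The main obstacle, as flagged in the abstract, is the quantitative control of $|I_k|$. Each $b$-adic or continued fraction refinement may cut the interval length by a large but bounded factor, and the alternation must be arranged so that $|I_k|$ shrinks fast enough to pin a single point, yet slowly enough — say, at most exponentially in a fixed polynomial of $k$ — that the set of candidate refinements searched at stage $k$ is of manageable size. Calibrating $\varepsilon_k$, the horizons $N(k,\varepsilon_k)$, the distortion constants, and the speed of decay of $|I_k|$ so that all these demands are simultaneously met is the technical heart of the argument; it is here that the metric statements on continued fractions and on $b$-ary expansions have to be adapted and combined into a single estimate valid inside an arbitrary cylinder of either type.

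Finally, for the complexity claim I would track the cost of stage $k$. One can take the horizon $N(k,\varepsilon_k)$ polynomial in $k$, the set of candidate refinements polynomial in $k$, and the test of a single candidate — evaluating finitely many digit and block counts together with the relevant deviation bound — polynomial in $k$, so that stage $k$ costs a fixed power of $k$. Since the Gauss--Kuzmin type estimate controls how $|I_k|$ shrinks, a number of stages polynomial in $n$ suffices to fix the first $n$ continued fraction digits of $\alpha$. Unwinding these dependences gives a total of $O(n^4)$ arithmetic operations to output the first $n$ digits of the continued fraction expansion, which is the stated bound.
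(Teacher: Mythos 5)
Your overall architecture (nested cylinders, one countable family of finitary conditions, metric deviation bounds inside an arbitrary cylinder, bounded distortion for the Gauss map) is the same as the paper's, but the step on which everything hinges is wrong as stated, and it is precisely the point the paper identifies as the main difficulty. You claim that, since the bad set inside $I_k$ has conditional measure less than $1/3$, ``bounded distortion lets me extract from the good part a single CF cylinder whose length is a definite fraction of $|I_k|$.'' First, no fraction independent of the horizon is possible: by Lemma~\ref{lemma:relative}, a continued fraction subcylinder of relative depth $N$ inside $I_k$ has $|J|/|I_k|\le 2/q_N^2$, which is exponentially small in $N$; what you actually need is a lower bound comparable to the \emph{typical} relative length $e^{-2NL}$, $L=\pi^2/(12\log 2)$. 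Second, even that corrected statement does not follow from bounded distortion plus positive conditional measure of the good set: the good set is a union of infinitely many depth-$N$ subcylinders, so pigeonhole gives nothing, and distortion only compares conditional to unconditional measures — it says nothing about the length of an individual deep subcylinder, which is governed by $\log q_N$, a quantity with Gaussian fluctuations of size $\sqrt{N}$. The missing ingredient is the central limit theorem for $\log q_n$ with error term $O(n^{-1/2})$ (Ibragimov, Morita, Vall\'ee; Lemma~\ref{thm:tcl}), which in Lemma~\ref{lemma:cfgrande} produces a ``good zone'' of measure at least $K|I|/\sqrt{n}$ consisting of subcylinders whose lengths are within a constant factor of $|I|e^{-2nL}$; the construction then works because this polynomially small good zone beats the exponentially small bad zones coming from Lemma~\ref{lemma:KPWc} and Lemma~\ref{lemma:BHS} (this comparison is the content of Lemma~\ref{lemma:main}). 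Your large-deviation citations control block frequencies, not $\log q_n$, so nothing in your proposal bounds $|I_{k+1}|/|I_k|$ from below; without that bound you also cannot bound the grid of candidate subintervals the algorithm must inspect, so the complexity claim loses its basis as well.

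Two further gaps. (i) Handling one condition $G_{k+1}$ per stage and ``interleaving'' the two kinds of refinement does not by itself yield normality: while you work on one condition, the digits being appended may have terrible discrepancy for every other base and block, and normality requires control at \emph{every} prefix length, not only at your checkpoints. This forces all currently active conditions to be enforced at every step with tolerance tending to $0$, and the per-step extensions to be short relative to what has already been built (the paper's Lemmas~\ref{lemma:Dcf} and~\ref{lemma:D}, items~2); the paper's $t$-brick mechanism, with the auxiliary $d$-ary intervals kept comparable in length to the CF cylinder via Proposition~\ref{prop:ladrillo}, is exactly the bookkeeping that makes this simultaneous control possible. (ii) Your complexity paragraph asserts that the horizon and the candidate set can be taken polynomial in $k$, but gives no mechanism: a CF extension of relative depth $N$ forces a search grid of size of order $e^{2NL}$, so the paper must take $n(s)=\Theta(\log s)$ per step, giving $O(s^{2L})$ candidates, per-candidate cost $O(n(s)t(s))$, and a total of $O(k^{2L+1}\log^{6/5}k)\subset O(k^4)$ operations for the first $k$ steps (which determine more than $k$ CF digits); the exponent $4$ comes from $2L+1<4$ and is not available without the length control discussed above.
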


{\bf On the problem of constructing a  number satisfying the two forms of normality.}
The problem  appeared explicitly  in the literature first  in~\cite{Queffelec2006}  
and then  in~\cite[Problem~10.49]{Bugeaud2012}. 
Recently Scheerer~\cite{Scheerer2017b} gave an algorithm that yields one such number
with doubly exponential computational complexity: 
the computation of the first $n$ digits of the continued fraction expansion
performs  doubly exponential in~$n$ many  mathematical operations.
Thus, as any other algorithm  with exponential complexity, Scheerer's algorithm
 can not run in human time. 
In contrast,  algorithms with polynomial complexity can.
To prove Theorem~\ref{thm} we give an algorithm that
can be implemented as an efficient computer program 
that outputs one digit after the other.
Our technique  elaborates on the algorithm  given by Becher, Heiber and Slaman~\cite{poly} which
 has just above quadratic complexity.
Madritsch, Scheerer and Tichy~\cite{MadritschScheererTichy2017}
also elaborated on the algorithm given in~\cite{poly}, but in a very different way,   
and they use it to compute  absolutely normal Pisot numbers efficiently.
Finally we remark that if  in our algorithm
we skip the treatment of  integer bases then
we obtain  a continued fraction normal number 
(with no guarantee of normality to integer bases).
Such an algorithm  differs substantially  from all the previously known  
constructions of continued fraction normal 
numbers~\cite{Pos1957,Adler1985,Bugeaud2012,Madritsch2016,Scheerer2017b}. 
\medskip

{\bf About the proof of Theorem \ref{thm}}.
Our algorithm  works incrementally to define, 
in the limit of the computation,  a real number $x$ in the unit interval.
The construction works by defining successive refinements 
of appropriate subintervals to achieve, in the limit, simple normality 
to all integer bases and continued fraction normality. 
The choice of each subinterval determines further digits in the expansion of $x$ in  integer bases and
in its continued fraction.  
We require that the choice 
 contributes to the two forms of normality
 but without revisiting the previous digits.
For this we need to control, at each step of the construction, 
the lengths of the new subintervals, as they  should be not too small.

We say that an interval $I$ is \cf-ary if 
 there is a finite continued fraction  $[a_1,\dotsc, a_n]$
such that the interval $I$ is equal to the set of  all  the numbers whose first $n$ 
digits of their continued fraction expansion  are $a_1,\dotsc, a_n$.
And we say that an interval $J$ is $b$-ary for some integer $b$ greater than or equal to $2$
if there is a finite block $d_1,\dotsc, d_n$ of digits  between $0$ and $b-1$ 
such that  $J$ is equal to the set of real numbers 
whose first $n$ digits of their $b$-ary expansion are equal to $d_1,\dotsc, d_n$.
The set of $b$-ary intervals determined by $n$ digits between $0$ and $b-1$
is a  partition of the unit interval in finitely many parts of equal length.
The set of \cf-ary intervals determined by $n$ digits  
also form a partition of the unit interval,
but in infinite parts of different length.
Our construction rests on the fact that the \cf-ary intervals 
determined by any $n$ digits  have,  nevertheless, an expected approximate length.
This is because the distribution 
of the logarithm of the convergent of  finite continued fractions of 
$n$ digits obeys a Gaussian law.  
Ibragimov~\cite{Ibragimov1961}  was the first to establish this theorem.
Morita~\cite[Theorem~8.1]{Morita1994}  and  Vall\'ee~\cite[Theorem~9]{Vallee1997}
obtained the same result with an optimal error term.
We use this optimal version in Lemma~\ref{lemma:cfgrande} to 
 guarantee the existence of enough \cf-ary subintervals
having the desired relative length with respect to the previously considered \cf-ary interval.
The control of the length of the $b$-ary subintervals is much simpler.
Proposition~\ref{prop:ladrillo} gives the needed estimate for the relative size of a 
$b$-ary subinterval of any given interval. 

To achieve continued fraction normality  
we need to bound the measure of the set 
inside any given \cf-ary  interval $I$
having too many or too few 
occurrences of a given block of digits in their continued fraction expansions.
This is essentially a result on large deviations proved  by 
 Kifer, Peres and Weiss~\cite[Corollary 3.2]{KiferPeresWeiss2001}  
but conditioned on the first digits that determine the interval~$I$.
We establish this result in Lemma~\ref{lemma:relative}.
To achieve normality in each integer base~$b$
we use Hardy and Wright's
estimate~\cite[Proof of Theorem 148]{HardyWright2008} 
for the number of blocks having 
too many or too few occurrences of a given digit, 
as stated in Lemma~\ref{lemma:BHS}.
At each step of the computation the algorithm determines 
a  finite extension of the continued fraction expansion of $x$, 
as well as a  finite extension of the expansion of $x$ in 
each of the finitely many integer bases designated for that step.
To obtain absolute normality 
the set of designated integer bases  increases with the  step number, 
and in the limit it consists of all integers greater than or equal to $2$.
To obtain continued fraction normality,
at each step the algorithm considers the occurrences of blocks from certain finite 
collection. The set of designated blocks increases monotonically in the step number 
and in the limit consists of all blocks of all positive integers.
\smallskip

{\bf Organisation of the paper}.
We devote Section \ref{sec:tools} to  the definitions and 
the  tools to be used  in the proof of Theorem~\ref{thm}.
We first present a non-recursive formulation of the convergent 
of a finite continued fraction 
and we use it  in Lemma~\ref{lemma:relative} to obtain 
convenient upper and lower  bounds of the  length of any  
\cf-ary subinterval  of a given \cf-ary interval.
These upper and lower bounds
propagate along most of the results of this work.
We give  aforementioned key  Lemmas \ref{lemma:cfgrande},
\ref{lemma:KPWc} and \ref{lemma:BHS},
as well as the material  to deal  with the  discrepancy associated to
continued fraction expansions and $b$-ary expansions,
see Lemmas \ref{lemma:Dcf} and \ref{lemma:D}.
In Section \ref{sec:proof}  we give the actual proof of Theorem~\ref{thm}. 
We present the algorithm,  we prove its correctness and we 
estimate the number of mathematical operations needed to 
compute the first $n$ digits of the continued 
fraction expansion of the  number defined by our algorithm.
The algorithm and its correctness are based on  Lemma \ref{lemma:main},
which is the main lemma of the paper.

\section{Needed definitions and  lemmas}\label{sec:tools}

{\bf Notation}. As usual we write $\N$ 
to denote the set of positive integers and  $\N^k$ to denote the  set of $k$ tuples of positive integers.
For a finite set $S$, $\#S$ is its cardinality.
For  an infinite set  $S$ of real numbers, $|S|$ is its Lebesgue measure;
hence, when $S$ is an interval, $|S|$ is its length.
We  use standard notation for the asymptotic behaviour of functions. 
We say that a function $g(x)$ is  $O(f(x))$ if  there are constants $x_0$ and $c$ 
such that for every $x\geq x_0$, $|g(x)| < c \cdot |f(x)|$.
We write $\log$ to denote the logarithm in base~$e$.

\subsection{\cf-ary intervals}

We write $x=[a_1, a_2, \ldots ]$ with  $a_1, a_2, \ldots $  positive integers
to denote the  continued fraction expansion of a real number $x$ in the unit interval.
The functions $p_{n}(x)$ and $q_{n}(x)$, called the  convergents of $x$,  
are defined recursively  as follows. 

If $x=[a_1, a_2, \ldots]$,
\begin{gather*}
p_{-1}(x)=q_0(x)=1
\\
p_0(x)=q_{-1}(x)=0
\end{gather*}
\\
and for $n\geq 1$,
\begin{gather*}
p_n(x)= a_n p_{n-1}(x) + p_{n-2}(x),
\\
q_n(x)= a_n q_{n-1}(x) + q_{n-2}(x).
\end{gather*}
When the real number $x$ is understood from the context
we write  $p_{-1}, q_{-1}, p_0, q_0,\ldots$ instead of  
$p_{-1}(x), q_{-1}(x), p_0(x), q_0(x),\ldots$
Given $a_1,\dotsc,a_n$, we have the equation
\[ 
[a_1,\dotsc,a_n] = \frac{p_n}{q_n}. 
\]
For irrational $x=[a_1, a_2, \ldots]$,  $p_n/q_n$ is  the $n$th approximant to $x$ 
and converges to $x$ as $n$ tends to infinity.
For rational  $x=[a_1, \dotsc, a_n]$,  $x=p_n/q_n$. Observe that 
$(p_n)_{n \geq 1}$ and $(q_n)_{n \geq 1}$ are increasing.

The convergent $q_n$ can be expressed by a   non-recursive formula, as follows.
We write $\amalg$ to denote the disjoint union operation.
We define the set $\cc P$ of subsets of positive integers  as 
\[
\cc P = \big\{C \subset \N :  (\exists D \subset \N) \ C = \amalg_{n \in D} \{n,n+1\} \big\}.
\]
For every pair of positive integers $r,s$ such that $r \leq s$, define
\begin{align*}
\Omega_{r,s} =& \big\{\cc I \subset \{r,\dotsc,s\} : ( \{r, \dotsc , s\}\setminus \cc I )\in \cc P \big\}.
\\
\alpha_{r,s} = &\sum_{\cc I \in \Omega_{r,s}} \prod_{i \in \cc I} a_i.
\end{align*}
And let   
\[
\alpha_{s+1,s}=1\mbox{ and } \alpha_{s+2,s}=0.
\]
The following proposition holds.
\begin{Proposition}\label{form}

Let $x = [a_1,a_2,...] \in (0,1)$.

\begin{enumerate}
\item  For every positive integer $s$,
$q_s(x) = \alpha_{1,s}$.

\item  
Let $r,r', s, s'$ be positive integers.
If $r \leq r'$ and $s \geq s'$ then 
 $\alpha_{r,s} \leq \alpha_{r',s'}$. 
Equality holds  if and only if $r=r'$ and $s=s'$.

\item Let $r$ and $s$ be positive integers. 
 If $r\leq s$ then $\alpha_{1,s} = \alpha_{1,r} \ \alpha_{r+1,s} + \alpha_{1,r-1} \ \alpha_{r+2,s}$.

\end{enumerate}
\end{Proposition}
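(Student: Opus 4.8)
I would prove all three parts directly from the combinatorial definition of $\alpha_{r,s}$, the common tool being an elementary remark about the index sets in $\Omega_{r,s}$: if $\mathcal{I}\in\Omega_{r,s}$, then the complement $\{r,\dots,s\}\setminus\mathcal{I}$ is a disjoint union of consecutive pairs $\{n,n+1\}$, so the largest index $s$ either lies in $\mathcal{I}$ or is forced to be paired with $s-1$ (hence $s-1\notin\mathcal{I}$ as well), and symmetrically for the smallest index $r$. The plan is to convert this remark into the two one-step recurrences
\[
\alpha_{r,s}=a_s\,\alpha_{r,s-1}+\alpha_{r,s-2}
\qquad\text{and}\qquad
\alpha_{r,s}=a_r\,\alpha_{r+1,s}+\alpha_{r+2,s}\qquad(r\le s),
\]
where the conventions $\alpha_{s+1,s}=1$ and $\alpha_{s+2,s}=0$ absorb the boundary cases. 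For the first recurrence one splits $\Omega_{r,s}$ into the $\mathcal{I}$ that contain $s$ and those that do not: in the first family $\mathcal{I}\setminus\{s\}$ ranges over $\Omega_{r,s-1}$ and contributes a factor $a_s$; in the second family $s$ is paired with $s-1$, so $\mathcal{I}$ ranges over $\Omega_{r,s-2}$. The second recurrence is the mirror image. Part~(1) then follows by induction on $s$: the base cases $\alpha_{1,-1}=0=q_{-1}$ and $\alpha_{1,0}=1=q_0$ hold by the conventions, and for $s\ge1$ the first recurrence for $\alpha_{1,s}$ is exactly the recurrence defining $q_s$, so the induction closes.

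For part~(2) I would first pass to the non-degenerate case $r\le r'\le s'\le s$, so that $\{r',\dots,s'\}$ is a nonempty index interval contained in $\{r,\dots,s\}$ (the remaining cases reduce to the conventions together with $\alpha\ge0$). Iterating the first recurrence in the upper index gives $\alpha_{r,s}\ge\alpha_{r,s-1}\ge\cdots\ge\alpha_{r,s'}$, since every $a_i\ge1$ and every $\alpha$ is nonnegative; iterating the second recurrence in the lower index gives $\alpha_{r,s'}\ge\cdots\ge\alpha_{r',s'}$, which chains to the asserted monotonicity. For the equality clause, observe that when $(r,s)\ne(r',s')$ at least one chain takes a genuine step, say $\alpha_{r,s-1}\to\alpha_{r,s}$ with $s>s'\ge r$; then $s\ge r+1$, so $\alpha_{r,s-2}\ge1$ (a genuine continuant, or the convention value~$1$, but never~$0$), and hence $\alpha_{r,s}=a_s\alpha_{r,s-1}+\alpha_{r,s-2}\ge\alpha_{r,s-1}+1>\alpha_{r,s-1}\ge\alpha_{r',s'}$; the case $r<r'$, $s=s'$ is symmetric.

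For part~(3) I would argue combinatorially, classifying $\mathcal{I}\in\Omega_{1,s}$ by whether the pair $\{r,r+1\}$ occurs in the complement $\{1,\dots,s\}\setminus\mathcal{I}$. If it does not, then no pair of the complement crosses the gap between $r$ and $r+1$, so $\mathcal{I}\cap\{1,\dots,r\}\in\Omega_{1,r}$ and $\mathcal{I}\cap\{r+1,\dots,s\}\in\Omega_{r+1,s}$ can be chosen independently and the products factor, contributing $\alpha_{1,r}\,\alpha_{r+1,s}$; if it does, then $r,r+1\notin\mathcal{I}$ and the remaining pairs split $\{1,\dots,r-1\}$ and $\{r+2,\dots,s\}$ independently, contributing $\alpha_{1,r-1}\,\alpha_{r+2,s}$. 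Summing the two cases yields the identity, with the conventions $\alpha_{s+1,s}=1$, $\alpha_{s+2,s}=0$ and $\alpha_{1,0}=1$ covering $r=s$ and $r=1$.

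I expect the only genuinely delicate point to be the equality clause of part~(2): one must locate a strict step in the chain of inequalities and check that it is not annihilated by the boundary value~$0$, which is exactly where the nonemptiness of the shorter interval is used. Everything else --- the disjoint-union splittings, the factoring of the products along the chosen cuts, and the bookkeeping of the three degenerate conventions --- is routine.
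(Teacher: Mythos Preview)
Your proof is correct and follows essentially the same approach as the paper, which merely sketches induction for part~(1) and asserts that parts~(2) and~(3) ``follow from the definition of $\alpha_{r,s}$''; your explicit one-step recurrences and the combinatorial splitting of $\Omega_{1,s}$ according to whether $\{r,r+1\}$ lies in the complement are exactly what that sentence unpacks to. Note only that you establish $\alpha_{r,s}\ge\alpha_{r',s'}$ rather than the printed $\le$; your direction is the correct one (and is what is actually used in the proof of the lemma on relative lengths of \cf-ary intervals), so the printed inequality is a typo.
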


\begin{proof}
Item 1  is true for $s=1$ and it follows by induction from 
$q_n = a_n q_{n-1} + q_{n-2}$.
\linebreak
Items {\em 2} and {\em 3} follow from the  definition of $\alpha_{r,s}$. 
\end{proof}

For a finite continued fraction $[a_1,\dotsc,a_n]$ we consider
the open  interval $I_{[a_1, \dotsc, a_n]}$ containing the numbers whose first $n$ 
digits of their continued fraction expansion  are $a_1,\dotsc,a_n$.
Thus, 
\begin{align*}
I_{[a_1, \dotsc, a_n]} &= ([a_1, \dotsc, a_n], [a_1, \dotsc, a_n +1]), \mbox{ or }
\\
I_{[a_1, \dotsc, a_n]} &= ([a_1, \dotsc, a_n+1], [a_1, \dotsc, a_n]) 
\end{align*}
We say that an interval $I$ is \cf-ary  of {\em order} $n$ if it is 
some $I_{[a_1, \dotsc, a_n]}$.

The  length of a \cf-ary interval   is 
\[
|I_{[a_1,\dotsc,a_n]}| = \frac{1}{q_n(q_n + q_{n-1})}.
\]
If  $a=[a_1,\dotsc,a_r]$, $b=[a_{r+1},\dotsc,a_s]$  and $c=[a_1,\dotsc,a_s]$,
we write $I_{a,b}$ to denote $I_c$.
For   $x=[d_1,\dotsc,d_n]$ we simply write   $q(x)$ to denote $q_n(x)$. 

\begin{Lemma}\label{lemma:relative}
  Let $a=[a_1,\dotsc,a_r]$, $b=[a_{r+1},\dotsc,a_s]$ and $c=[a_1,\dotsc,a_s]$. 
Then,
\begin{enumerate}
\item \qquad  $ q(a) q(b) \ \leq q(c) \ \leq \ 2q(a) q(b)$.
 
 \item \qquad $|I_b| / 2 \ \leq \ {|I_{a,b}|}/{|I_a|}\ \leq \ 2|I_b|$.
\end{enumerate}
\end{Lemma}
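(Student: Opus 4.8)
The plan is to derive item~2 directly from item~1 together with the closed-form length formula $|I_{[a_1,\dotsc,a_n]}| = 1/\big(q_n(q_n+q_{n-1})\big)$, so the real work is item~1, and for that the natural tool is Proposition~\ref{form}. First I would translate everything into the $\alpha$-notation: writing $r$ for the length of $a$ and $s$ for the length of $c$, we have $q(a)=\alpha_{1,r}$, $q(c)=\alpha_{1,s}$, and $q(b)=\alpha_{r+1,s}$ by item~1 of Proposition~\ref{form} (the last one because $b=[a_{r+1},\dotsc,a_s]$ is just a continued fraction whose partial quotients are relabelled, and the definition of $\alpha_{r+1,s}$ depends only on the entries $a_{r+1},\dotsc,a_s$ in the required index range). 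Then item~3 of Proposition~\ref{form} gives the exact identity
\[
q(c)=\alpha_{1,s}=\alpha_{1,r}\,\alpha_{r+1,s}+\alpha_{1,r-1}\,\alpha_{r+2,s}
     = q(a)\,q(b)+\alpha_{1,r-1}\,\alpha_{r+2,s}.
\]
Since all the $\alpha$'s are sums of products of positive integers, the correction term $\alpha_{1,r-1}\,\alpha_{r+2,s}$ is nonnegative, which immediately yields the lower bound $q(c)\ge q(a)q(b)$.

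For the upper bound in item~1 I would bound the correction term by the main term. By the monotonicity statement (item~2 of Proposition~\ref{form}), $\alpha_{1,r-1}\le\alpha_{1,r}=q(a)$ and $\alpha_{r+2,s}\le\alpha_{r+1,s}=q(b)$, so $\alpha_{1,r-1}\,\alpha_{r+2,s}\le q(a)q(b)$, giving $q(c)\le 2q(a)q(b)$. (One should also note that $q(b)\ge 1$ and $q(a)\ge1$ so the edge cases $r=0$ or $s=r$ behave, using the conventions $\alpha_{s+1,s}=1$, $\alpha_{s+2,s}=0$; these make the identity in item~3 continue to hold and the inequalities trivial.)

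It remains to deduce item~2. Using $q_{n-1}\le q_n$ one has the two-sided estimate $q_n^2\le q_n(q_n+q_{n-1})\le 2q_n^2$, hence
\[
\frac{1}{2q_n^2}\ \le\ |I_{[a_1,\dotsc,a_n]}|\ \le\ \frac{1}{q_n^2}.
\]
Applying this to $c$ and to $a$, and to $b$ as well, together with item~1 ($q(a)q(b)\le q(c)\le 2q(a)q(b)$, so $q(c)^2$ and $q(a)^2q(b)^2$ agree up to a factor $4$), gives
\[
\frac{|I_{a,b}|}{|I_a|}
 \ \le\ \frac{1/q(c)^2}{1/(2q(a)^2)}
 \ =\ \frac{2q(a)^2}{q(c)^2}
 \ \le\ \frac{2q(a)^2}{q(a)^2 q(b)^2}
 \ =\ \frac{2}{q(b)^2}
 \ \le\ 2|I_b|,
\]
and symmetrically
\[
\frac{|I_{a,b}|}{|I_a|}
 \ \ge\ \frac{1/(2q(c)^2)}{1/q(a)^2}
 \ =\ \frac{q(a)^2}{2q(c)^2}
 \ \ge\ \frac{q(a)^2}{2\cdot 4 q(a)^2 q(b)^2}
 \ =\ \frac{1}{8 q(b)^2},
\]
which is a factor off from the claimed $|I_b|/2$; so I would need to be more careful, tracking the $q_{n-1}$ terms rather than crudely replacing $q_n(q_n+q_{n-1})$ by $q_n^2$ up to factor $2$. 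The honest route is to keep $|I_{a,b}|/|I_a| = \dfrac{q(a)\big(q(a)+q'(a)\big)}{q(c)\big(q(c)+q'(c)\big)}$ where $q'$ denotes the preceding convergent, express $q(c)$ and $q'(c)$ exactly through $q(a),q'(a),q(b),q'(b)$ via the matrix product $\big(\begin{smallmatrix}q(c)&\ast\\ q'(c)&\ast\end{smallmatrix}\big)=\big(\begin{smallmatrix}q(a)&q'(a)\\ \ast&\ast\end{smallmatrix}\big)\big(\begin{smallmatrix}q(b)&\ast\\ q'(b)&\ast\end{smallmatrix}\big)$, and then the factor of $2$ falls out cleanly. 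I expect this bookkeeping with the antepenultimate convergents to be the only real obstacle; item~1 itself is essentially immediate from Proposition~\ref{form}.
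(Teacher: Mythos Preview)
Your treatment of item~1 is correct and coincides with the paper's: both use Proposition~\ref{form}(3) to write $q(c)=\alpha_{1,r}\alpha_{r+1,s}+\alpha_{1,r-1}\alpha_{r+2,s}$ and then bound the second summand below by $0$ and above by $\alpha_{1,r}\alpha_{r+1,s}$ via monotonicity.

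For item~2, however, your crude first pass contains an actual error in the upper bound, not just the lower one. The final step ``$2/q(b)^2 \le 2|I_b|$'' is false: since $|I_b|=1/\big(q(b)(q(b)+q'(b))\big)$ with $q'(b)\ge 1$, one always has $|I_b|<1/q(b)^2$, so the inequality goes the wrong way. The crude sandwich $1/(2q_n^2)\le |I|\le 1/q_n^2$ therefore loses a factor of~$2$ on \emph{both} sides, giving only $|I_b|/8\le |I_{a,b}|/|I_a|\le 4|I_b|$.

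Your proposed remedy---keeping track of the penultimate convergents rather than squashing $q_n(q_n+q_{n-1})$ to $q_n^2$---is exactly right, and it is precisely what the paper does. The paper writes $|I_{a,b}|^{-1}=\alpha_{1,s}(\alpha_{1,s}+\alpha_{1,s-1})$, expands each factor via Proposition~\ref{form}(3) (this is your matrix product, in $\alpha$-language), and then bounds the resulting expression above by $2\,\alpha_{1,r}(\alpha_{1,r}+\alpha_{1,r-1})\cdot\alpha_{r+1,s}(\alpha_{r+1,s}+\alpha_{r+1,s-1})=2|I_a|^{-1}|I_b|^{-1}$ and below by $\tfrac12|I_a|^{-1}|I_b|^{-1}$, using only the monotonicity $\alpha_{1,r-1}\le\alpha_{1,r}$, $\alpha_{r+2,s}\le\alpha_{r+1,s}$, etc. So your plan is the paper's plan; you simply need to execute the expansion rather than stop at the outline.
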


\begin{proof}
{\em 1}.\  Using Proposition~\ref{form},
\begin{align*}
q(a) q(b) = & \ \alpha_{1,r} \ \alpha_{r+1,s} \leq \alpha_{1,r} \ \alpha_{r+1,s} + \alpha_{1,r-1} \ \alpha_{r+2,s}
= \alpha_{1,s} = q(c).
\\
q(c) =&\ \alpha_{1,r} \ \alpha_{r+1,s} + \alpha_{1,r-1} \ \alpha_{r+2,s}
  \leq 2  \alpha_{1,r} \ \alpha_{r+1,s} \leq 2q(a)q(b).
\end{align*}
\begin{align*}
\omit\rlap{\hspace*{-1cm}
{\em 2}. \ $|I_{a,b}|^{-1} =\  \alpha_{1,s} \left( \alpha_{1,s} + \alpha_{1,s-1} \right) $}
\\
 =&\  \Big(\alpha_{1,r} \ \alpha_{r+1,s} + \alpha_{1,r-1} \ \alpha_{r+2,s}\Big)
      \Big( \alpha_{1,r} \left(\alpha_{r+1,s} + \alpha_{r+1,s-1}\right) +
      \alpha_{1,r-1} \left(\alpha_{r+2,s} + \alpha_{r+2,s-1} \right)\Big)
\\
 \leq &\  2\alpha_{1,r} \ \alpha_{r+1,s} 
 \Big(
\alpha_{1,r} \left(\alpha_{r+1,s} + \alpha_{r+1,s-1} \right) +
\alpha_{1,r-1} \left(\alpha_{r+1,s} + \alpha_{r+1,s-1} \right)
\Big)
\\
 = &\  2\alpha_{1,r} (\alpha_{1,r} + \alpha_{1,r-1})
     \alpha_{r+1,s}  \left( \alpha_{r+1,s} + \alpha_{r+1,s-1} \right) 
\\=&\
     2 |I_a|^{-1} |I_b|^{-1}.
\end{align*}
\begin{align*}
\omit\rlap{\hspace*{-1cm}
$|I_{a,b}|^{-1} =\  \alpha_{1,s} \left( \alpha_{1,s} + \alpha_{1,s-1} \right) $}
\\                  
= &\  \Big(\alpha_{1,r} \ \alpha_{r+1,s} + \alpha_{1,r-1} \ \alpha_{r+2,s} \Big)
\Big( \alpha_{1,r} \big(\alpha_{r+1,s} + \alpha_{r+1,s-1} \big) +
\alpha_{1,r-1} \big(\alpha_{r+2,s} + \alpha_{r+2,s-1} \big)
\Big)
\\
 \geq &\  \Big(\alpha_{1,r} \ \alpha_{r+1,s} \Big)
\Big( \alpha_{1,r} \big(\alpha_{r+1,s} + \alpha_{r+1,s-1} \big)
\Big) 
\\\geq  &\  \Big(\alpha_{1,r} \ \alpha_{r+1,s} \Big)
\Big( \frac{\alpha_{1,r}+\alpha_{1,r-1}}{2} \Big)  (\alpha_{r+1,s} + \alpha_{r+1,s-1})
\\
=& \ 
\frac{1}{2} |I_a|^{-1} |I_b|^{-1}. 
\end{align*}
\end{proof}

The distribution of $\log q_n$ obeys in the limit a Gaussian law.
It was first proved by  Ibragimov~\cite{Ibragimov1961}.
Then Philipp ~\cite[Satz 3]{Philipp1967} obtained an error term of  $O(n^{-1/5})$, 
which was later improved by Mischyavichyus~\cite{Mischyavichyus1987}  to $O(n^{-1/2} \log n )$.
Morita~\cite[Theorem 8.1]{Morita1994} obtained  the optimal error term 
of  order  $O(n^{-1/2})$;  a different proof of the same bound  was given 
by Vall\'ee~\cite[Th\'eoreme 9]{Vallee1997}.
In the sequel we write $L$ for L\'evy's constant $\pi^2/(12 \log 2)$.

\begin{Lemma}[\protect{Morita~\cite[Theorem 8.1]{Morita1994},
Vall\'ee~\cite[Th\'eoreme 9]{Vallee1997}}]
\label{thm:tcl}
The distribution of the random variable $\log q_n(x)$ is asymptotically Gaussian.
There is $K_0$ and  $n_0$ such that for every~$n\geq n_0$,
\[
\left|
 \mbox{Pr} 
 \Big[ x \in (0,1) :  
  -y \leq\frac{  \log q_n(x) -  n L  }{\sigma \sqrt{n}} \leq  y  \Big] -
 \frac{1}{\sqrt{2\pi}} \int_{-y}^y e^{-z^2/2} dz 
\right|<
 \frac{K_0}{\sqrt n},
 \] 
where $\sigma$ is a positive absolute  constant. 
\end{Lemma}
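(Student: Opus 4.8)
Since Lemma~\ref{thm:tcl} with the optimal $O(n^{-1/2})$ error term is itself a substantial theorem, the plan is to quote it through the cited works of Morita and Vall\'ee rather than to reprove it; the structure of such a proof is as follows. First I would rewrite $\log q_n(x)$ as a Birkhoff sum over the Gauss map $T(x)=\{1/x\}$. With $x^{(k)}=T^k x$, the identity $x=\frac{p_n+p_{n-1}x^{(n)}}{q_n+q_{n-1}x^{(n)}}$ gives $|q_{n-1}x-p_{n-1}|=\prod_{k=0}^{n-1}x^{(k)}$, while $|q_{n-1}x-p_{n-1}|=(q_n+q_{n-1}x^{(n)})^{-1}$ together with $q_{n-1}\le q_n$ forces $\frac1{2q_n}\le|q_{n-1}x-p_{n-1}|\le\frac1{q_n}$. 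Hence
\[
\log q_n(x)=\sum_{k=0}^{n-1}f(T^k x)+O(1),\qquad f(t)=-\log t,
\]
uniformly in $x$ and $n$. The observable $f$ is integrable for the Gauss measure $d\mu=\frac1{\log2}\,\frac{dt}{1+t}$, with $\int f\,d\mu=\frac1{\log2}\int_0^1\frac{-\log t}{1+t}\,dt=\frac{\pi^2}{12\log2}=L$ and exponential moments $\int e^{sf}\,d\mu<\infty$ for $s<1$. So it suffices to establish a Berry--Esseen bound of order $n^{-1/2}$ for $Z_n=(S_nf-nL)/(\sigma\sqrt n)$, where $S_nf=\sum_{k<n}f\circ T^k$ and $\sigma^2$ is the asymptotic variance; the uniformly bounded correction above, divided by $\sigma\sqrt n$, disturbs the distribution function by only $O(n^{-1/2})$.

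Next I would run the Nagaev--Guivarc'h spectral method. On a space of functions analytic on a complex neighbourhood of $[0,1]$, where it is nuclear, the Gauss--Kuzmin--Wirsing operator $(\mathcal{L}g)(x)=\sum_{k\ge1}(k+x)^{-2}g\big(\tfrac1{k+x}\big)$ has a simple eigenvalue $1$ with eigenfunction the Gauss density $h$ and the rest of its spectrum inside a disk of radius $\theta<1$. The twist $\mathcal{L}_t g=\mathcal{L}(e^{itf}g)$, that is $(\mathcal{L}_t g)(x)=\sum_{k\ge1}(k+x)^{it-2}g\big(\tfrac1{k+x}\big)$, is analytic in $t$ for $\mathrm{Im}\,t>-1$ and satisfies $\int_0^1\mathcal{L}_t^n h\,dx=\int e^{itS_nf}\,d\mu$. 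Analytic perturbation theory gives, for $t$ near $0$, a dominant simple eigenvalue $\lambda(t)=\exp\big(iLt-\tfrac12\sigma^2t^2+O(|t|^3)\big)$ with $\sigma^2>0$ (because $f$ is not cohomologous to a constant), an analytic spectral projection $\Pi_t$, and $\|\mathcal{L}_t^n-\lambda(t)^n\Pi_t\|\le C\theta_1^n$ with $\theta_1<1$. Feeding this into the standard Berry--Esseen smoothing inequality with frequency cut-off $c\sqrt n$, the range $|t|\le\varepsilon\sqrt n$ is controlled by this eigenvalue expansion and yields $\sup_y|F_n(y)-\Phi(y)|\le K_0'/\sqrt n$, where $F_n$ and $\Phi$ are the distribution functions of $Z_n$ and of the standard normal. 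The symmetric form in the statement then follows by writing the bracketed probability minus $\frac1{\sqrt{2\pi}}\int_{-y}^y e^{-z^2/2}\,dz$ as $(F_n(y)-\Phi(y))-(F_n(-y)-\Phi(-y))$ and taking $K_0=2K_0'$.

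The hard part is the non-local range $\varepsilon\sqrt n\le|t|\le c\sqrt n$, where one must show $\sup\{\,r(\mathcal{L}_u):\varepsilon'\le|u|\le M\,\}<1$ for the spectral radius --- an aperiodicity, or Dolgopyat-type, estimate expressing that $f=-\log t$ is not cohomologous over $T$ to a function valued in a discrete subgroup of $\mathbb{R}$. This, together with the extra care forced by $f$ being unbounded near $0$, so that convergence, analyticity and nuclearity of $\mathcal{L}_t$ must all be checked on a complex strip about the real axis, is precisely the technical core of the theorem, and it is what Morita~\cite[Theorem~8.1]{Morita1994} and Vall\'ee~\cite[Th\'eor\`eme~9]{Vallee1997} carry out; accordingly we simply quote their result.
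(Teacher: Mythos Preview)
Your proposal is correct and matches the paper's treatment: the paper does not prove this lemma at all but simply quotes it from Morita and Vall\'ee, and you do the same, additionally supplying an accurate sketch of the Nagaev--Guivarc'h spectral argument that underlies those references. The Birkhoff-sum representation $\log q_n=\sum_{k<n}(-\log T^kx)+O(1)$, the identification $\int(-\log t)\,d\mu=L$, and the perturbation-theoretic outline you give are all sound, so your write-up strictly subsumes what the paper does here.
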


Vall\'ee  in~\cite{Vallee1997} and also in~\cite{FlajoletVallee1998},
obtained  an expression for $\sigma$ using the generalised transfer 
operators $L_s$ for $s>1$  over a suitable space of functions,  
also known as the Ruelle-Mayer operator,
defined by
\[
L_s[f](z) = \sum_{n=1}^\infty \left(\frac{1}{n+z}\right)^s f\!\left(\frac{1}{n+z}\right).
\]
These operators $L_s$ have a simple  dominant positive eigenvalue $\lambda(s)$.
The  expression for $\sigma$  uses the dominant eigenvalue of~$L_2$, 
\[
\sigma^2=\lambda''(2)-\lambda'(2)^2
\]
where $\lambda'$ and $\lambda''$ 
denote the derivative and second derivative of $\lambda$
and 
\begin{align*}
\lambda'(2) &  =- \pi^2/(12 \log 2) \text{ is  Levy's constant with negative sign.}
\\
\lambda''(2) &\text{ is the variance of the law of continuants, known as Hensley's constant.}
\end{align*}
We remark that our use of $\sigma$ occurs just in Lemma \ref{lemma:cfgrande}
and we not require its exact value $\sigma$,
 any upper bound suffices.

\begin{Lemma}\label{lemma:cfgrande}
 There are positive constants $K, c$ and a positive integer $n_1$ 
such that for any \cf-ary interval $I$ 
and any  integer $n\geq n_1$,
 the Lebesgue measure of the union of the \cf-ary subintervals  $J$ of $I$ 
of relative order $n$  such that 
\[
\frac{|I|}{4}e^{-2nL-2c}\leq |J| \leq  2|I|  e^{-2nL+2c}
\]
 is greater than $K|I|/\sqrt{n}$. 
\end{Lemma}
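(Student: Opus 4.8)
The plan is to reduce the statement about an arbitrary \cf-ary interval $I = I_a$ (with $a = [a_1,\dotsc,a_r]$) to the unconditioned Gaussian statement of Lemma~\ref{thm:tcl}, using Lemma~\ref{lemma:relative} to transfer size estimates across the nesting. The \cf-ary subintervals $J$ of $I$ of relative order $n$ are exactly the intervals $I_{a,b}$ with $b = [a_{r+1},\dotsc,a_{r+n}]$ ranging over all $n$-tuples of positive integers. By Lemma~\ref{lemma:relative}(2), $|I_{a,b}|/|I_a| \in [\,|I_b|/2,\ 2|I_b|\,]$, and since $|I_b| = 1/(q(b)(q(b)+q(b)))$ lies between $1/(2q(b)^2)$ and $1/q(b)^2$, the relative length $|J|/|I|$ is controlled above and below by $q(b)^{-2}$ up to an absolute constant. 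Hence it suffices to show that the Lebesgue measure of the set of $x\in(0,1)$ whose first $n$ continued fraction digits $[d_1,\dotsc,d_n]$ satisfy $nL - c \le \log q_n(x) \le nL + c$ is at least (a constant times) $1/\sqrt n$, for a suitable absolute constant $c$ and all large $n$; the factor $|I|$ then reappears because $\sum_b |I_{a,b}| = |I_a| = |I|$ and the measure of the union of the selected $J$'s is comparable, via Lemma~\ref{lemma:relative}(2) again, to $|I|$ times the measure of the corresponding union inside $(0,1)$ of order-$n$ \cf-ary intervals.

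**Applying the central limit theorem.** For the unconditioned estimate, I would invoke Lemma~\ref{thm:tcl} with a fixed window, say $y$ chosen so that $\frac{1}{\sqrt{2\pi}}\int_{-y}^{y} e^{-z^2/2}\,dz \ge 1/2$. Then for $n \ge n_0$,
\[
\mathrm{Pr}\Big[ x : |\log q_n(x) - nL| \le y\,\sigma\sqrt n \,\Big] \ge \tfrac12 - \tfrac{K_0}{\sqrt n}.
\]
This, however, controls a window of width $\Theta(\sqrt n)$, not a window of constant width $c$. To descend to a constant window one slices the interval $[nL - y\sigma\sqrt n,\ nL + y\sigma\sqrt n]$ into $O(\sqrt n)$ subintervals of length $2c$; by pigeonhole, at least one such subinterval $[\,t - c,\ t+c\,]$ carries measure at least $(\tfrac12 - K_0/\sqrt n)/(C_1\sqrt n) \ge K'/\sqrt n$ for large $n$. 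The subtlety is that the center $t$ of the good slice may depend on $n$ and on $I$ (through the conditioning), whereas the statement wants the window centered at $nL$. This is exactly why the lemma is stated with the generous factors $1/4$ and $2$ and the doubled exponent $2nL \pm 2c$: absorbing $t$ into the slack. Concretely, the good slice gives a set of order-$n$ \cf-ary subintervals with $t - c \le \log q_n \le t + c$, i.e. $e^{-t-c} \le q_n^{-1} \le e^{-t+c}$; since $|t - nL| \le y\sigma\sqrt n$ is not $O(1)$, one must be more careful — and indeed I expect the actual argument applies the pigeonhole slicing not to $\log q_n$ directly but to the standardized variable, and then it must argue the good slice can be taken near $0$. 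Here is the clean route: apply Lemma~\ref{thm:tcl} with the \emph{fixed small} window $y = c/(\sigma\sqrt n)$ — but that is not allowed since $y$ must be a constant for the error term $K_0/\sqrt n$ to dominate the Gaussian mass $\approx 2y/\sqrt{2\pi}$. So instead: take $y$ a small absolute constant; then
\[
\mathrm{Pr}\Big[ |\log q_n - nL| \le y\sigma\sqrt n\Big] - \mathrm{Pr}\Big[ |\log q_n - nL| \le c\Big]
\]
is, by Lemma~\ref{thm:tcl} applied at both window radii, within $2K_0/\sqrt n$ of $\frac{1}{\sqrt{2\pi}}\int_{c/(\sigma\sqrt n)}^{y} e^{-z^2/2}\,dz$; but the latter is $\ge \frac{1}{\sqrt{2\pi}}(y - c/(\sigma\sqrt n))\cdot e^{-y^2/2}$, which does not produce a lower bound for the small window. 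The correct conclusion is the reverse: $\mathrm{Pr}[|\log q_n - nL| \le c] \le \frac{1}{\sqrt{2\pi}}\cdot 2c/(\sigma\sqrt n) + K_0/\sqrt n$, an \emph{upper} bound — useless here. Thus the lemma as stated, with only a width-$2c$ window around $nL$, genuinely requires a lower bound on a constant-width window, which the Berry–Esseen form alone does not give; one needs a \emph{local} limit theorem (a density bound $\mathrm{Pr}[\log q_n \in [t,t+c]] \asymp c/\sqrt n$ uniformly for $t$ near $nL$), which follows from Morita's/Vallée's analysis of the transfer operator $L_s$ giving an asymptotic \emph{density} for $\log q_n$, not merely a CDF. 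I would cite that density version (it is implicit in the same references) and integrate it over $[nL - c, nL + c]$ to get the required $\ge K/\sqrt n$.

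**The conditioning.** Finally, to pass from the unconditioned statement to the conditioned one inside $I = I_a$, I use that the distribution of the ``tail'' digits $(a_{r+1}, a_{r+2}, \dotsc)$ of $x$, conditioned on $x \in I_a$, is — up to the bounded distortion encoded in Lemma~\ref{lemma:relative}(2), i.e. a Radon–Nikodym density bounded between $1/2$ and $2$ — the same as the unconditioned Gauss-type distribution of the first digits. Concretely, the measure of the union $\bigcup J$ of the good relative-order-$n$ subintervals of $I$ equals $\sum_{b \text{ good}} |I_{a,b}|$, and each term satisfies $\tfrac12 |I_a|\,|I_b| \le |I_{a,b}| \le 2|I_a|\,|I_b|$; hence
\[
\Big|\bigcup J\Big| \ \ge \ \tfrac12 |I_a| \sum_{b \text{ good}} |I_b| \ = \ \tfrac12 |I| \cdot \Big|\bigcup_{b \text{ good}} I_b\Big|,
\]
and the last factor is $\ge K'/\sqrt n$ by the unconditioned density estimate (after checking that the ``good'' condition on $b$ — namely $q(b)^{-2}$ comparable to $e^{-2nL}$ up to constants — is exactly the event $|\log q_n(x) - nL| \le c'$ for an appropriate absolute $c'$, which it is, by the $q(b) \asymp q_n$ and $|I_b| \asymp q(b)^{-2}$ comparisons above). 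Choosing $K = K'/2$, $c = c'$, and $n_1 = \max(n_0, \cdot)$ large enough finishes the proof. The main obstacle, as flagged, is that a Berry–Esseen (CDF) form of the CLT is insufficient for a constant-width window; the argument must lean on the finer local/density statement available from the transfer-operator spectral analysis, and the bulk of the work is bookkeeping the constants so that the crude factors $1/4$ and $2$ in the displayed inequality comfortably absorb both the Lemma~\ref{lemma:relative} distortion and the $|I_b| \asymp q(b)^{-2}$ slack.
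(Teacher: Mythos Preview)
Your reduction from the conditioned statement (inside an arbitrary \cf-ary interval $I=I_a$) to the unconditioned statement on $(0,1)$ via Lemma~\ref{lemma:relative} is exactly what the paper does, and your bookkeeping of the constants there is fine.

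The genuine gap is in the second paragraph. You convince yourself that the Berry--Esseen form of Lemma~\ref{thm:tcl} cannot yield a lower bound of order $1/\sqrt n$ on the probability of the constant-width window $|\log q_n - nL|\le c$, and that a local limit theorem is needed. This is wrong, and the mistake is that you treat $c$ as if it were given in advance. It is not: the lemma asserts the \emph{existence} of suitable constants $K,c$, so $c$ is yours to choose. The Berry--Esseen bound in Lemma~\ref{thm:tcl} is uniform in the endpoint, so applying it at $y=c/(\sigma\sqrt n)$ gives
\[
\mathrm{Pr}\big[\,|\log q_n - nL|\le c\,\big]
\ \ge\ \frac{1}{\sqrt{2\pi}}\int_{-c/(\sigma\sqrt n)}^{c/(\sigma\sqrt n)} e^{-z^2/2}\,dz \;-\;\frac{K_0}{\sqrt n}.
\]
For $n$ large the integrand exceeds $1/2$, so the integral is at least $c/(\sigma\sqrt{2\pi n})$. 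Now simply pick $c=2\sigma\sqrt{2\pi}\,K_0$ (this is precisely the paper's choice); then the integral is at least $2K_0/\sqrt n$, and the right-hand side is at least $K_0/\sqrt n$. No local limit theorem, no pigeonhole slicing, no dependence of the window center on $n$: once $c$ is chosen large relative to the absolute Berry--Esseen constant $K_0$, the Gaussian mass in the standardized window $[-c/(\sigma\sqrt n),\,c/(\sigma\sqrt n)]$ dominates the error term, and both are of order $1/\sqrt n$.

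Your sentence ``that is not allowed since $y$ must be a constant for the error term $K_0/\sqrt n$ to dominate the Gaussian mass'' has the logic backwards: here we want the Gaussian mass to dominate the error term, and taking $y$ of order $1/\sqrt n$ with a sufficiently large coefficient achieves exactly that. The rest of your proposal (the conditioning step and the $|I_b|\asymp q(b)^{-2}$ slack absorbed by the factors $1/4$ and $2$) is correct and matches the paper.
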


\begin{proof}
Take a positive constant $c > \sigma \sqrt{2 \pi} K_0 $, say
$c = 2 \sigma \sqrt{2 \pi} K_0 $.
For each positive integer $n$ consider the set 
\[
S_n= \{ x\in (0,1): -c \leq \log q_n(x) - nL\leq c\}
\]
By Lemma~\ref{thm:tcl}
there is $K_0$ and $n_0$
such that for every integer  $n\geq n_0$, the Lebesgue measure of $S_n$
is at least 
\[
\frac{1}{\sqrt{2\pi}}\int^{c/(\sigma \sqrt n)}_{-c/(\sigma \sqrt n) } e^{-w^2/2} dw
- \frac{K_0}{\sqrt n}.
\]
But for $n$ sufficiently large so that $e^{-w^2/2} > 1/2$ in the following integration
region,
\[
\frac{1}{\sqrt{2\pi}}\int^{c/(\sigma \sqrt n)}_{-c/(\sigma \sqrt n) } e^{-w^2/2} dw 
> \frac{1}{\sqrt{2 \pi}} \frac{c}{\sigma \sqrt n} = 2\frac{K_0}{\sqrt n},
\]
hence we have that the Lebesgue measure of $S_n$ is at least
$K_0/\sqrt{n}.$

Consider a sequence of digits 
   $a_1, a_ 2, \ldots a_n $
such that 
\[
-c\leq \log q_n(x) - nL\leq c.
\]
This is equivalent to 
\[
e^{-nL-c}\leq \frac{1}{q_n(x)}\leq e^{-nL+c}.
\]
We call $I_n = I_{a_1,\dotsc,a_n}$, whose length is 
$|I_n| = 1/({q_n(q_n+q_{n-1})})$. Clearly
\[
\frac{1}{2q_n^2} \leq |I_n| \leq \frac{1}{q_n^2}.
\]
Fix a \cf-ary interval $I$.
The concatenation of  $a_1 ,\ldots, a_n$ after the digits that define $I$
yields a  \cf-ary subinterval $J$ of $I$. By  Lemma~\ref{lemma:relative},
\[
\frac{1}{4 q^2_n} \leq \frac{|I_n|}{2} \leq  \ \frac{|J|}{|I|} \
\leq 2 |I_n| \leq \frac{2}{q_n^2}.
\]
Thus,
\[
\frac{1}{4}|I|e^{-2nL-2c} \leq  \ |J|\  \leq 2|I|e^{-2nL-2c}.
\]
Then, the set of \cf-ary subintervals $J$ of  $I$ of relative order $n$
such that 
\[  
 \frac{|I|}{4}  e^{-2nL-2c} \leq |J|\leq  2 |I | e^{-2nL+2c}
\]
has Lebesgue measure  at least 
\[
K_0/(2|I|\sqrt{n}).
\]
\end{proof}

\subsection{Discrepancy associated to  continued  fraction expansions}

The Gauss map $T$ is a function from real numbers in the unit interval 
 to real numbers in the unit interval, 
defined by  $T(0)=0$ and  $T(x)= 1/x - \floor{1/x}$.
If $[a_1,a_2, \ldots]$ denotes the continued fraction expansion of $x$, 
 then $T^n(x)=[a_{n+1}, a_{n+2}, \ldots]$ and $a_n=\lfloor 1/T^{n-1}(x)\rfloor$, 
for~$n\geq 1$.
The map $T$ possesses an invariant ergodic  measure, the Gauss measure
$\mu$, which is absolutely continuous with respect to Lebesgue measure, 

\[
\mu({\rm  d} x)= \frac{{\rm  d} x}{(1+x)\log 2}.
\]
Then the Gauss measure for a \cf-ary interval  $I_{[a_1, \dotsc,a_n]}$ in the unit interval is, 

\[
\mu(I_{[a_1,\dotsc, a_n]} )= 
\int^{r'/s'}_{r/s} \mu({\rm d}x),
\]
where $r/s$ and $r'/s'$ denote the rational numbers
$[a_1, \dotsc ,a_{n}]$ and
$[a_1, \dotsc ,a_{n}+1]$
ordered such that $r/s<r'/s'$.

We write ${\mathbb I}_I(x)$ to denote the characteristic function of the interval~$I$,
so ${\mathbb I}_I(x)=1$ if $x\in I$ and ${\mathbb I}_I(x)=0$, otherwise.
We say that $[a_1, a_2, \ldots]$ is a normal continued fraction if, 
for every positive integer $k$ and for every 
block  of $k$ positive integers $v_1,\dotsc,v_k$,

\[
\lim_{n\to\infty} \frac{1}{n}\sum_{j=0}^{n-1} {\mathbb I}_{I_{[v_1,\dotsc,v_k]}}(T^j x)  
=\mu(I_{[v_1, \dotsc, v_k]} ).
\]
Equivalently,

\[
\lim_{n\to\infty}\frac{1}{n} \#\{ j: 1\leq j\leq n, a_{j}=v_1, \dotsc, a_{j+k-1}=v_k\}
=\mu(I_{[v_1, \dotsc, v_k]}).
\]

For example, quadratic irrationals do not have a normal continued fraction 
expansion because they are periodic.
 The continued fraction expansion of  $e$, $ [2;1,2,1,1,4,1,1,6,1,1,8,\ldots]$, is not normal either
because it is the concatenation of  the pattern $(1 m 1)$, for all  even $m$ in increasing order.
Applying Birkhoff's Ergodic  Theorem~\cite{Birkhoff1931} we obtain that
almost every real in the unit interval has normal continued fraction expansion.
\pagebreak

For a real number $x$ and a block $v$ of $k$ positive integers $v_1,\dotsc, v_k$
the discrepancy of $x$ with respect to $v$ in the first $n$ positions 
of its continued fraction expansion is defined as

\[
\D{\cf-ary}_{v,n}(x)= \Big|\frac{1}{n}\sum_{j=0}^{n-1} {\mathbb I}_{I_{[v_1,\dotsc,v_k]}}(T^j x) - 
\mu\left(I_{[v_1,\dotsc,v_k]}\right)\Big|.
\]
Clearly, a real number $x$  has a normal continued fraction if and only if for every positive integer $k$,
and for every block $v$ of $k$ positive integers, 

\[
\lim_{n\to \infty} \D{\cf-ary}_{v,n}(x)=0.
\]
The following result on large deviations is essentially Kifer, Peres and Weiss' 
Corollary~3.2 in~\cite{KiferPeresWeiss2001}   but conditioning on the first $r$ terms. 

\begin{Lemma} \label{lemma:KPWc}
 Let $I_{[a_1,\dotsc,a_r]}$ be a \cf-ary interval, and let $b$ be a block of $k$ positive integers $b_1,\dotsc,b_k$.
Then for every positive real $\delta$ and for every positive integer $n$,
 \[
\Big|\Big\{ x \in I_{[a_1,\dotsc,a_r]} : 
\ \Big| \frac{1}{n} \sum_{i=0}^{n-1} \mathbb I_{I_{[b_1,\dotsc,b_k]}}(T^{r+i}x) -     \mu(I_{[b_1,\dotsc,b_k]}) \Big| 
 > \delta \Big\} \Big| 
\leq   6 M e^{-\frac{\delta^2n}{2M}} |I_{[a_1\dotsc,a_r]}|, 
\]
 where $M= M(\delta, k) = \Big\lceil k - \log \left({\delta^2}/(2 \log 2) \right ) \Big\rceil$, or any larger number.
\end{Lemma}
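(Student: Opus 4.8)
The plan is to deduce this conditional large-deviation bound from the \emph{unconditional} one of Kifer, Peres and Weiss by changing variables through the inverse branch of $T^r$ over $I_{[a_1,\dots,a_r]}$. Put $a=[a_1,\dots,a_r]$, $A=I_{[b_1,\dots,b_k]}$, and let
\[
F=\Big\{\,y\in(0,1):\Big|\tfrac1n\sum_{i=0}^{n-1}\mathbb I_A(T^i y)-\mu(A)\Big|>\delta\,\Big\}
\]
be the unconditional ``bad set''. Since $T^{r+i}=T^i\circ T^r$, for $x\in I_a$ the value $\mathbb I_A(T^{r+i}x)$ depends on $x$ only through $y:=T^r x$, so the set whose Lebesgue measure must be bounded is exactly $E:=\{x\in I_a:T^r x\in F\}$; it suffices to prove $|E|\le 6M\,e^{-\delta^2 n/(2M)}\,|I_a|$.

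Next I would set up and estimate the inverse branch. Using Proposition~\ref{form} and the convergent recursions, the map $\psi:(0,1)\to I_a$ defined by $\psi(y)=(p_r+p_{r-1}y)/(q_r+q_{r-1}y)$, where $p_r,q_r,p_{r-1},q_{r-1}$ are the convergents of $a$, is a monotone $C^1$ bijection satisfying $T^r\circ\psi=\mathrm{id}$, so that $E=\psi(F)$ up to a null set. Differentiating gives $|\psi'(y)|=(q_r+q_{r-1}y)^{-2}\le q_r^{-2}\le 2|I_a|$, using $q_{r-1}\le q_r$ and $|I_a|=(q_r(q_r+q_{r-1}))^{-1}$; the same uniform distortion bound $|\psi'(y)|\le 2|I_a|$ can also be read off from Lemma~\ref{lemma:relative}(2) by taking \cf-ary intervals $I_b\ni y$ of growing order and letting the order tend to infinity (since $|\psi(I_b)|/|I_b|=|I_{a,b}|/|I_b|\le 2|I_a|$). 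The change of variables formula then yields
\[
|E|=\int_F|\psi'(y)|\,dy\le 2\,|I_a|\,|F|.
\]

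It remains to bound $|F|$, which is precisely what Corollary~3.2 of Kifer, Peres and Weiss~\cite{KiferPeresWeiss2001} provides: a Chernoff-type large-deviation estimate for the Birkhoff sums of $\mathbb I_A-\mu(A)$ over the exponentially mixing Gauss system, of the form $|F|\le 3M\,e^{-\delta^2 n/(2M)}$ with $M=\lceil k-\log(\delta^2/(2\log 2))\rceil$; here the summand $k$ accounts for the overlap of consecutive occurrences of a length-$k$ block and the logarithmic summand for the size of $\mu(A)$, while passing from their Gauss-measure statement to Lebesgue measure costs only the harmless factor $2\log 2<2$ (since $\frac1{2\log 2}\le\frac1{(1+x)\log 2}\le\frac1{\log 2}$ for $x\in(0,1)$). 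Combining this with the previous display yields $|E|\le 6M\,e^{-\delta^2 n/(2M)}\,|I_a|$; and since the right-hand side increases with $M$, the bound persists for every larger value of $M$, which is the assertion.

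The step I expect to be the real work is the last one: verifying that their Corollary~3.2, once transcribed to Lebesgue measure and to the present normalisation of $M$, genuinely delivers the constant $3M$, so that the distortion factor $2$ brings us exactly to $6M$. If the cited statement is not in directly quotable form, the fallback is to re-run its argument --- a Bernstein--Chernoff estimate for $\sum_i(\mathbb I_A(T^i y)-\mu(A))$ based on the exponential mixing of the Gauss map, grouping the $n$ steps into $k$-separated blocks to restore near-independence and absorbing the resulting loss into $M$ --- while keeping all constants explicit; the reduction to the unconditional case carried out above is unaffected by which route one takes.
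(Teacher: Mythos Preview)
Your proposal is correct and follows essentially the same strategy as the paper: reduce the conditional bad set to the unconditional one at the cost of a factor~$2$, then invoke Kifer--Peres--Weiss for the unconditional bound and pass from Gauss to Lebesgue measure. The only cosmetic difference is that the paper obtains the factor~$2$ by decomposing both bad sets as disjoint unions of \cf-ary intervals indexed by the same set~$\mathcal Z$ and applying Lemma~\ref{lemma:relative}(2) term-by-term, whereas you use the Jacobian bound $|\psi'|\le 2|I_a|$ for the inverse branch directly --- you yourself note these are equivalent. One small bookkeeping point: the paper cites Lemma~3.1 together with Remark~5.1 of~\cite{KiferPeresWeiss2001} to get $\mu(F)\le 2M e^{-\delta^2 n/(2M)}$, then uses $|F|\le 2(\log 2)\mu(F)<\tfrac32\mu(F)$ to reach $|F|\le 3M e^{-\delta^2 n/(2M)}$; this confirms the constant you were worried about in your last paragraph.
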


\begin{proof}
We write $I_a$ and $I_b$ to denote, respectively, $I_{[a_1,\dotsc,a_r]}$, $I_{[b_1,\dotsc,b_k]}$.
 \\
Define
\begin{align*}
\tau_{b,n,\delta}&= \Big\{ x \in (0,1) : \ \Big|\frac{1}{n} \sum_{i=0}^{n-1} {\mathbb I}_{I_b} (T^i x) - \mu(I_b) \Big| > \delta \Big\}
\\
\tau^a_{b,n,\delta}&=\Big\{ x \in I_a : \ \Big|\frac{1}{n} \sum_{i=0}^{n-1}  {\mathbb I}_{I_b} (T^{r+i}x) - \mu(I_b) \big| > \delta \Big\}
 \end{align*}
The set  $\tau^a_{b,n,\delta}$ is the disjoint union of \cf-ary intervals $I_{ac}$ where $c$ belongs to some
 appropriate set~$\cc Z \subset \N^{k+n-1}$.
The set $\tau_{b,n,\delta}$ is the disjoint union of $I_c$ where $c \in \N^{k+n-1}$ belongs to the same~$\cc Z$.
 Lemma~3.1 and Remark 5.1 both in~\cite{KiferPeresWeiss2001}  establish that 
\[\mu(\tau_{b,n,\delta})
\leq  2 M(\delta, k) e^{-\frac{\delta^2n}{2M(\delta,k)}},
\]
 where 
\[
M(\delta,k) = \min \{ m \in \N : (\log 2) 2^{-m+k} \leq \delta^2/2 \}.
\]
For each $c \in \N^{k+n-1}$, by  Lemma~\ref{lemma:relative}.$2$,
 \[
\frac{|I_{ac}|}{|I_a|} \leq 2 |I_c|.
\]
Adding all these inequalities for $c$ in $\cc Z$ 
and given the fact that
  for all measurable $S$, 
\[
|S|\leq 2 (\log 2) \mu(S) < \frac{3}{2} \mu(S),
\]
we obtain
\[
\frac{|\tau^a_{b, n,\delta}|}{|I_a|} \leq 2 |\tau_{b, n,\delta}| \leq 3 \mu(\tau_{b, n,\delta}) 
 \leq 6 M(\delta, k) e^{-\frac{\delta^2n}{2M(\delta,k)}}.
\]
 Since $x e^{-t/x}$ is increasing in $x$  for $x,t>0$,
 we can replace $M(\delta, k)$ by any larger value $M$.
 \end{proof}

\begin{Lemma}\label{lemma:Dcf}
Let   $x= [a_1, a_2, \dotsc,a_n]$ and   $u= [b_1, \dotsc,b_s]$. 
 Let $v$  be a block of $k$ positive integers
$ v_1, \dotsc, v_k$. Let   $\epsilon$ be a positive real less than $1$.
\begin{enumerate}
\item  If   $\D{\cf-ary}_{v,n}(x)<\epsilon$
and
$\D{\cf-ary}_{v,s}(u)<\epsilon - (k-1)/s$ then $\D{\cf-ary}_{v,n+s}(xu)<\epsilon $.

\item  If   $\D{\cf-ary}_{v,n}(x)<\epsilon$ and
$s/n<\epsilon$ then 
\begin{enumerate}
\item for every $\ell$ such that $1\leq \ell\leq s$, $\D{\cf-ary}_{v,n+\ell}( xu)< 2\epsilon$,

\item   $\D{\cf-ary}_{v,n+s}(ux)< 2\epsilon$.
\end{enumerate}
\end{enumerate}
\end{Lemma}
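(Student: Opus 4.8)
The plan is to unwind the definition of $\D{\cf-ary}$ in terms of block-occurrence counts and then argue purely combinatorially about how occurrences of the block $v$ in the concatenated continued fraction split across the two pieces. Write $N(w)$ for the number of occurrences of $v_1,\dotsc,v_k$ starting at positions $1,\dotsc,|w|$ inside a finite word $w$ of continued-fraction digits; the relevant quantity is $\frac{1}{n}\sum_{j=0}^{n-1}\mathbb I_{I_{[v_1,\dotsc,v_k]}}(T^jx)$, which counts occurrences of $v$ starting in the first $n$ positions of the expansion of $x$. The key elementary fact is that when we concatenate $x=[a_1,\dotsc,a_n]$ and $u=[b_1,\dotsc,b_s]$ to form $xu=[a_1,\dotsc,a_n,b_1,\dotsc,b_s]$, an occurrence of $v$ starting in one of the first $n+s$ positions either lies entirely within $x$ (starts at position $\le n-k+1$), or straddles the junction (starts at one of the $k-1$ positions $n-k+2,\dotsc,n$), or lies within $u$ (starts at a position $>n$, i.e.\ a position $1,\dotsc,s$ of $u$). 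So the count for $xu$ differs from (count in $x$) $+$ (count in $u$) by at most $k-1$ straddling occurrences.

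For item 1, I would write $n\,\mathrm{occ}_v(x)$, $s\,\mathrm{occ}_v(u)$, $(n+s)\,\mathrm{occ}_v(xu)$ for the three occurrence counts (frequency times length). Then $\bigl|(n+s)\,\mathrm{occ}_v(xu) - n\,\mathrm{occ}_v(x) - s\,\mathrm{occ}_v(u)\bigr|\le k-1$. Dividing by $n+s$ and inserting the Gauss measure $\mu(I_{[v_1,\dotsc,v_k]})$ in the right places, I get
\[
\D{\cf-ary}_{v,n+s}(xu)\le \frac{n}{n+s}\D{\cf-ary}_{v,n}(x)+\frac{s}{n+s}\D{\cf-ary}_{v,s}(u)+\frac{k-1}{n+s}.
\]
Since $\D{\cf-ary}_{v,n}(x)<\epsilon$, $\D{\cf-ary}_{v,s}(u)<\epsilon-(k-1)/s$, and the convex-combination weights sum to $1$, the first two terms together are $<\epsilon - \frac{s}{n+s}\cdot\frac{k-1}{s}=\epsilon-\frac{k-1}{n+s}$, and adding $\frac{k-1}{n+s}$ gives exactly $<\epsilon$. (One should double-check the bookkeeping: the $(k-1)/s$ correction in the hypothesis is tuned precisely so that the straddling term cancels, so I would present that cancellation explicitly.)

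For item 2, the point is that we no longer control $u$ at all, but $u$ is short relative to $x$ ($s<\epsilon n$), so the contribution of the prepended or appended short word is absorbed into a single extra $\epsilon$. For 2(b), appending $u$ to get $ux$... wait, in 2(a) we append $u$ after $x$ (writing $xu$) and look at prefixes of length $n+\ell$, $\ell\le s$; in 2(b) we prepend $u$ before $x$ (writing $ux$) and look at length $n+s$. In both cases the new occurrences relative to the known-good prefix/factor of length $n$ number at most $s+(k-1)$, and the length changes from $n$ to between $n$ and $n+s$. Writing the trivial bound $\bigl|(n+\ell)\,\mathrm{occ}_v(w') - n\,\mathrm{occ}_v(x)\bigr|\le \ell + (k-1) \le s+(k-1)$ (every position not among the first $n-k+1$ of $x$ can only change things by $1$ each, and there are at most $s+(k-1)$ such positions $\le n+\ell$), dividing by $n$ and using $\D{\cf-ary}_{v,n}(x)<\epsilon$ plus $s/n<\epsilon$ gives, roughly,
\[
\D{\cf-ary}_{v,n+\ell}(w')\le \D{\cf-ary}_{v,n}(x)+\frac{s+k-1}{n}+\Bigl(\tfrac{1}{n}-\tfrac{1}{n+\ell}\Bigr)\cdot n\,\mathrm{occ}_v(x)+\dotsb,
\]
and since $\mathrm{occ}_v(x)\le 1$ and $\mu\le 1$, all the error terms are $O(s/n)=O(\epsilon)$, with the constants working out to give the clean bound $2\epsilon$. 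I expect the main obstacle — really the only subtlety — to be doing this last estimate carefully enough to land exactly on the constant $2\epsilon$ rather than, say, $3\epsilon$: one has to be careful to combine the change in numerator and the change in denominator in the right order and to use $s<\epsilon n$ and $k-1\le s$ (valid since $\D{\cf-ary}_{v,n}(x)<\epsilon<1$ forces $n>k-1$, or one simply notes $k-1<n$ so the straddling term is $<\epsilon$ as well, hence $s+(k-1)<2\epsilon n$). The prepend case 2(b) is essentially identical to the append case, with the roles of which positions shift by $s$, and I would just remark that the same count applies.
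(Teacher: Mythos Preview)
Your approach is the same as the paper's: count occurrences of $v$ in the concatenation, control the difference from the counts in the pieces, and translate back to discrepancy. Part~1 is correct and matches the paper's argument (the paper writes separate upper and lower bounds rather than a single convex-combination inequality, but that is cosmetic).

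In part~2 there is a miscount that is precisely the source of your difficulty with the constant. With the convention that $|[a_1,\dotsc,a_m]|_v$ counts only fully contained occurrences (starting positions $j\le m-k+1$; equivalently, for a finite word of length exactly $m$ the terms $\mathbb I_{I_v}(T^j\,\cdot\,)$ with $j>m-k$ vanish automatically), extending $x$ by $\ell$ digits adds exactly the starting positions $n-k+2,\dotsc,n+\ell-k+1$, which is $\ell$ positions, not $\ell+(k-1)$; likewise prepending $s$ digits adds $s$ positions. There is no extra $k-1$. With this correction your bound becomes $0\le |w'|_v-|x|_v\le \ell\le s$, and then (using that $t\mapsto (|x|_v+t)/(n+t)$ is nondecreasing since $|x|_v\le n$)
\[
\frac{|x|_v}{n+s}\;\le\;\frac{|w'|_v}{n+\ell}\;\le\;\frac{|x|_v+s}{n+s},
\]
from which the bounds $\mu(I_v)\pm 2\epsilon$ follow directly using only $s/n<\epsilon$ (hence $s/(n+s)<\epsilon$) and $\mu(I_v)\le 1$, with no hypothesis on $k$ needed. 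Your attempted justifications ``$k-1\le s$'' and ``$k-1<n$ so the straddling term is $<\epsilon$'' are neither given nor derivable from the hypotheses, but once the spurious $k-1$ is removed the issue disappears and the clean $2\epsilon$ falls out; this is exactly what the paper does.
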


\begin{proof} 
Let $|[a_1,\dotsc,a_n]|_{v_1,\dotsc ,v_k}=
\left|\{  j:  1\leq j \leq n -k+1 ,  a_j=v_1, \dotsc, a_{j+k-1}= v_k \} \right| $
be the number of occurrences of the block $v_1,\dotsc,v_k$ in the continued fraction
expansion $[a_1,\dotsc,a_n]$.

For part~{\em 1}, assume  $\D{\cf-ary}_{v,n}(x)<\epsilon$ and $\D{cf-ary}_{v,s}(x)<\epsilon - (k-1)/s$.
Then, 
\begin{align*}
|xu|_v&\leq |x|_v + |u|_v) + k-1\\
&\leq (n+s) \mu(I_v) + (n+s)\epsilon - s \frac{k-1}{s} + k-1\\
&=(n+s) (\mu(I_v) + \epsilon).
\\
|xu|_v&\geq |x|_v + |u|_v \\
&\geq (n+s) \mu(I_v) - (n+s)\epsilon \\
&=(n+s) (\mu(I_v) - \epsilon).
\end{align*}
Therefore,   $\D{\cf-ary}_{v,n+s}(xu)<\epsilon $. 

For part {\em 2}, assume  $\D{\cf-ary}_{v,n}(x)<\epsilon$ and   $s/n<\epsilon$. 
Let $\ell$ such that $1\leq \ell \leq s$. 
Then,
\begin{align*}
  \frac{| [a_1,\dotsc,a_n, b_1,\dotsc,b_\ell]|_v}{n+\ell}\; &\leq   \frac{|x|_v}{n+s}  +\frac{s}{n+s}
\\&\leq \ \frac{(\mu(I_v)+\epsilon)n}{n+s} + \frac{\epsilon n}{n+s}
\\&\leq \ (2\epsilon +\mu(I_v))\frac{ n}{n+s} 
\\&\leq \ 2\epsilon \frac{ n}{n+s} +\mu(I_v)
\\&\leq 2\epsilon +\mu(I_v).
\end{align*}
And
\begin{align*}
  \frac{|[a_1,\dotsc,a_n, b_1,\dotsc,b_\ell]|_v}{n+\ell}\; &\geq   \frac{|x|_v}{n+s}\\
  &\geq \frac{(\mu(I_v)-\epsilon)n}{n+s}\\
  &\geq  \mu(I_v)-\epsilon -\mu(I_v)\;\frac{s}{n+s}& \text{by elementary means}\\
  &\geq  \mu(I_v) -\epsilon -\frac{s}{n}\\
  &\geq \mu(I_v)-2\epsilon,&\text{since $\epsilon>s/n$.}
 \end{align*}
We conclude $\D{\cf-ary}_{v,n+\ell}([a_1,\dotsc,a_n,b_1\dotsc,b_\ell])<2\epsilon$. 
\\Item {\em (b)},  $\D{\cf-ary}_{v,n+s}(ux)< 2\epsilon$, is proved similarly.
\end{proof}

\subsection{Discrepancy associated to expansions in a given integer base} \label{sec:integer}

We say that a base is  an integer greater than or equal to $2$,
 a digit in base $b$ is an integer in $\{0, \dotsc, b-1\}$,
and a block in base $b$ is a finite sequence of digits in base $b$.
If $u$ is a block, its length is denoted by~$|u|$.
We define the discrepancy of the first $n$ digits of a  
block $u=a_1,\dotsc, a_{|u|}$  in base~$b$ as
\[
\D{b-ary}_n(u)= \max\left\{ 
\Big|  \frac{1}{n} \#\{j: 1\leq j\leq n,   a_j=s \} -\frac{1}{b} \Big| : s\in \{0,\dots,b-1\}\right\}.
\]
Clearly, a real number $x$ is simply normal to base $b$ if and only if
its expansion in base $b$, $a_1 a_2 \ldots$ is such that
\[
\lim_{n\to\infty} \D{b-ary}_n(a_1\ldots a_n) = 0.
\] 
In   the construction we use  the following   explicit 
bound for the number of blocks of a given length 
having larger discrepancy than a given value.  

\begin{Lemma}[\protect{\cite[Lemma 2.5]{poly}, 
adapted from~\cite[Theorem 148]{HardyWright2008}}]\label{lemma:BHS}
Fix a base $b$ and a block length~$k$.  
For every real $\epsilon$ such that $6/k \leq
  \epsilon \leq 1/b$,  the number of blocks of length $k$ with  
$b$-ary discrepancy  greater than or equal to  $\epsilon$ is at most
$2 \displaystyle{ b^{k+1}  e^{- b \epsilon^2 k/6}}$.
\end{Lemma}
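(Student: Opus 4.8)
\textbf{Proof proposal for Lemma~\ref{lemma:BHS}.}
The plan is to count blocks by the number of times a fixed digit $s$ appears. Fix $s\in\{0,\dotsc,b-1\}$ and let $m$ be the number of positions in which $s$ occurs in a block of length $k$. There are exactly $\binom{k}{m}(b-1)^{k-m}$ blocks with this property. A block has $b$-ary discrepancy $\geq\epsilon$ only if for \emph{some} digit $s$ the count deviates from $k/b$ by at least $\epsilon k$, so by a union bound over the $b$ choices of $s$ it suffices to bound, for a single $s$, the quantity $\sum_{|m-k/b|\geq \epsilon k}\binom{k}{m}(b-1)^{k-m}$ and then multiply by $b$.

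The core estimate is the standard tail bound for the binomial-type sum: I would show
\[
\sum_{m \le k/b - \epsilon k}\binom{k}{m}(b-1)^{k-m} \;+\; \sum_{m \ge k/b + \epsilon k}\binom{k}{m}(b-1)^{k-m}
\;\le\; 2\, b^{k}\, e^{-b\epsilon^2 k/6},
\]
which, after multiplying by the $b$ choices of digit $s$, gives the claimed $2\,b^{k+1}e^{-b\epsilon^2 k/6}$. This is exactly Hardy and Wright's argument in the proof of Theorem~148 of~\cite{HardyWright2008}: one writes each tail sum as a probability under the distribution of the number of occurrences of $s$ (so $p=1/b$), applies the exponential/Chernoff-type inequality by comparing consecutive terms of the sum against a geometric series, and uses the hypothesis $\epsilon \le 1/b$ to control the ratio, together with $\epsilon \ge 6/k$ to absorb lower-order factors (such as the polynomial factor $\sqrt{k}$ or the leading term itself) into the exponential. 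Since the statement of the lemma is quoted verbatim from~\cite{poly} and attributed to~\cite{HardyWright2008}, I would simply reproduce that estimate, being careful that the constant $6$ in the exponent is compatible with the range $6/k\le\epsilon\le 1/b$.

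I do not expect a genuine obstacle here; the only delicate point is bookkeeping of constants. One must check that the polynomial and boundary factors that appear when bounding a binomial tail by its extreme term times a geometric series are dominated by passing from an exponent like $b\epsilon^2 k/2$ (the naive Chernoff exponent for $p=1/b$) down to $b\epsilon^2 k/6$, which is precisely what the slack between $\epsilon^2/2$ and $\epsilon^2/6$ buys, and that the hypothesis $\epsilon \ge 6/k$ is the threshold that makes this slack sufficient uniformly in $b$ and $k$. Beyond verifying these inequalities, the proof is routine.
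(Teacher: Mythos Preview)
Your proposal is correct and is precisely the argument behind the cited references, but note that the paper itself does not prove Lemma~\ref{lemma:BHS}: it simply quotes the statement from~\cite[Lemma~2.5]{poly}, which in turn adapts~\cite[Theorem~148]{HardyWright2008}. So there is nothing to compare against in this paper; your sketch (fix a digit, count blocks by occurrence multiplicity, bound the binomial tail by a geometric series as in Hardy--Wright, and take a union bound over the $b$ digits) is exactly the content of the cited proof.
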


If $v$ and $u$ are blocks, we write $vu$  for their concatenation.

\begin{Lemma}[\protect{\cite[Lemma 3.1]{poly}}]\label{lemma:D}
  Let $u$ and $v$ be blocks in base~$b$ and  let $\epsilon > 0$.

\begin{enumerate}
 \item  If  $\D{b-ary}_{|u|}(u)<\epsilon$ and
$\D{b-ary}_{|v|}(v)<\epsilon$, then $\D{b-ary}_{|uv|}(uv)< \epsilon$.

 \item If $\D{b-ary}_{|v|}(v)<\epsilon$ and  $|u|/|v|<\epsilon$, then 
 \begin{enumerate}
 \item for every $\ell$ less than or equal to $|u|$, 
 $\D{b-ary}_{|v|+\ell}( v u )< 2\epsilon.$
 \item $\D{b-ary}_{|v|+|u|}(uv )< 2\epsilon$. 
\end{enumerate}
\end{enumerate}
\end{Lemma}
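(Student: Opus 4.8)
The whole argument is a routine counting argument, parallel to the proof of Lemma~\ref{lemma:Dcf} but simpler: since a single digit never straddles the seam of a concatenation, no correction term of the form $(k-1)$ appears. For a block $w$ in base $b$ and a digit $s$, write $N_s(w)$ for the number of positions of $w$ that carry the digit $s$, so that
\[
\D{b-ary}_{|w|}(w)=\max_{0\le s< b}\Big|\frac{N_s(w)}{|w|}-\frac1b\Big|.
\]
The two facts I would use repeatedly are the additivity $N_s(w_1w_2)=N_s(w_1)+N_s(w_2)$ and the trivial bound $0\le N_s(w')\le\ell$ whenever $w'$ is a prefix of length $\ell$ of some block.

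For part~{\em 1}, the hypotheses say that for every digit $s$ the quantity $N_s(u)$ differs from $|u|/b$ by less than $\epsilon|u|$, and $N_s(v)$ differs from $|v|/b$ by less than $\epsilon|v|$. Adding these and applying the triangle inequality,
\[
\Big|N_s(uv)-\frac{|uv|}{b}\Big|\le\Big|N_s(u)-\frac{|u|}{b}\Big|+\Big|N_s(v)-\frac{|v|}{b}\Big|<\epsilon\big(|u|+|v|\big)=\epsilon\,|uv|;
\]
dividing by $|uv|$ and taking the maximum over $s$ gives $\D{b-ary}_{|uv|}(uv)<\epsilon$.

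For part~{\em 2}(a), fix $\ell$ with $1\le\ell\le|u|$ and let $u'$ be the length-$\ell$ prefix of $u$, so the first $|v|+\ell$ digits of $vu$ form $vu'$. For the upper estimate I would bound, for each digit $s$,
\[
\frac{N_s(vu')}{|v|+\ell}=\frac{N_s(v)+N_s(u')}{|v|+\ell}\le\frac{N_s(v)}{|v|}+\frac{\ell}{|v|}<\frac1b+\epsilon+\epsilon,
\]
using $N_s(u')\le\ell\le|u|$ together with the hypothesis $|u|/|v|<\epsilon$. For the lower estimate, discard $N_s(u')\ge0$ and use $\frac{|v|}{|v|+\ell}=1-\frac{\ell}{|v|+\ell}\ge1-\frac{|u|}{|v|}>1-\epsilon$:
\[
\frac{N_s(vu')}{|v|+\ell}\ge\frac{N_s(v)}{|v|}\cdot\frac{|v|}{|v|+\ell}\ge\Big(\frac1b-\epsilon\Big)(1-\epsilon)\ge\frac1b-2\epsilon,
\]
the last step by elementary means (and trivially when $1/b<\epsilon$, since then the left side is already $\ge0>1/b-2\epsilon$). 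Combining, $|N_s(vu')/(|v|+\ell)-1/b|<2\epsilon$ for every $s$, hence $\D{b-ary}_{|v|+\ell}(vu)<2\epsilon$. Part~{\em 2}(b) is the same computation with the seam on the other side: $N_s(uv)/(|u|+|v|)$ is at most $|u|/|v|+N_s(v)/|v|<\epsilon+1/b+\epsilon$ and at least $(N_s(v)/|v|)\big(|v|/(|u|+|v|)\big)\ge(1/b-\epsilon)(1-\epsilon)\ge1/b-2\epsilon$, so $\D{b-ary}_{|u|+|v|}(uv)<2\epsilon$.

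I do not expect any genuine obstacle; the statement is essentially bookkeeping. The only two points that need a little care are (i) attaching the $\epsilon$ coming from the ``extra'' digits to the correct denominator --- always the longer block $v$, not the short prefix --- so that the output is $2\epsilon$ rather than something larger, and (ii) the elementary inequality $(1/b-\epsilon)(1-\epsilon)\ge1/b-2\epsilon$ used for the two lower bounds, which must be handled with the sign of $1/b-\epsilon$ in mind.
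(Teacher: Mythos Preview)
Your proposal is correct and follows essentially the same route as the paper's proof: a direct digit-count with the additivity $N_s(w_1w_2)=N_s(w_1)+N_s(w_2)$, bounding the short block crudely by its length and the long block via the hypothesis. The only cosmetic difference is in the algebra for the lower estimate in part~2: the paper expands $(1/b-\epsilon)\frac{|v|}{|v|+|u|}$ as $1/b-\epsilon-\frac{1}{b}\cdot\frac{|u|}{|v|+|u|}$ and then uses $|u|/|v|<\epsilon$, whereas you pass through $(1/b-\epsilon)(1-\epsilon)\ge 1/b-2\epsilon$; both are elementary and equivalent in spirit.
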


%

\begin{proof} {\it 1.} It follows easily by considering a proper convex combination.

{\it 2.}
Let  $|a_1\ldots a_n|_d=\#\{  j:  1\leq j \leq n,  a_j=d \} .$
Fix the base $b$. Let $u$ and $v$ be blocks. 
Fix $\ell$ and $\epsilon$.
We write $(vu)_1 \ldots  (vu)_m$ for the block of the first $m$ digits of  $(vu)$.
\begin{align*}
  \frac{| (v u)_1\ldots (vu)_{|v|+\ell})|_d}{|v|+\ell}\; &\geq   \frac{|v|_d}{|v|+|u|}\\
  &\geq \frac{(1/b-\epsilon)|v|}{|v|+|u|}\\
  &\geq 1/b-\epsilon -(1/b)\;\frac{|u|}{|v|+|u|},& \text{\ by elementary means}\\
  &\geq 1/b-2\epsilon,&\text{\ since $\epsilon>|u|/|v|$.}
  \end{align*}
By a similar verification,
\[
 \frac{| (v u)\ldots (vu)_{|v|+\ell}|_d}{|v|+\ell}  \leq 1/b   + 2\epsilon.
\]
We conclude $\D{b-ary}_{|v|+\ell}( v u )< 2\epsilon.$
\\
 The proof of  point {\em (b)},  $\D{b-ary}_{|v|+|u|}(uv )< 2\epsilon$,   is similar.
\end{proof}

As usual, for any integer $b$ greater than or equal to $2$, 
 we say that an interval is $b$-ary, if it is of the form
\[\left( 
\frac{a}{b^k}, \frac{a+1}{b^k}\right)
\]
for some positive integer $k$ and and integer $a$ with $0\leq a< b^k$.
In this case we also say that the interval has order $k$.

\section{Proof of Theorem~\ref{thm}} \label{sec:proof}

The following definition is the core of the construction.

\begin{Definition}
 For an integer $t \geq 2$, a $t$-brick is a $t$-uple 
$(\sigma_{\cf},\sigma_2,\dotsc,\sigma_t)$  as follows
\begin{itemize} 

\item[-]  the interval $\sigma_{\cf}$ is \cf-ary;

\item[-]  for  every $d=2,\ldots,t$,
$\sigma_d$ is $d$-ary interval or the union of two 
consecutive $d$-ary intervals of the same order;
 
\item[-] for every $d=2,\ldots,t$,
$\sigma_{\cf} \subset \sigma_d$ 
 and $|\sigma_{\cf}|/|\sigma_d| \geq {1}/({16e^{4c}d})$.
\end{itemize}
\end{Definition}

\begin{Definition}
 If we have a $t$-brick $\vec \sigma=(\sigma_{\cf},\sigma_2,\dotsc,\sigma_t)$ and a $t'$-brick 
 $\vec \tau = (\tau_{\cf},\tau_2,\dotsc,\tau_{t'})$ we say that $\vec \tau$ refines $\vec \sigma$
 if $t' \geq t$, $\tau_{\cf} \subset \sigma_{\cf}$ and $\tau_d \subset \sigma_d$ for $d=2,\dotsc,t$.
 The refinement is said to have discrepancy less than $\epsilon$  if 
\begin{enumerate}
\item[-]  for each $d = 2,\ldots,t$
 the new block $w$ in base $d$ corresponding to the inclusion 
$\tau_d \subset \sigma_d$ has simple discrepancy 
$\D{d-ary}(w)$ less than $\epsilon$.

\item[-] the new \cf-block of  digits $w$
 corresponding to the inclusion $\tau_{\cf} \subset \sigma_{\cf}$
satisfies that for every block $v$ of $t$ digits all less than or equal to $t$,  
$\D{\cf-ary}_{v,|w|}(w)$ is less than 
$\epsilon - (t-1)/|w|$. 
\end{enumerate}
\end{Definition}

We highlight  the following trivial fact about lengths of $d$-ary subintervals
because it will be important  in the proof of Lemma \ref{lemma:main}, our main lemma.

\begin{Proposition}\label{prop:ladrillo}
 Let $d \geq 2$ and $m \in \mathbb N$. Every interval $I$ whose
 Lebesgue measure is less than $d^{-m}$ is contained in a $d$-ary
 interval of order $m$ or in the union of two such intervals.
\end{Proposition}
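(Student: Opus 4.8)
The plan is to exploit the elementary fact that the $d$-ary intervals of order $m$ partition the unit interval, up to their finitely many shared endpoints, into cells of length exactly $d^{-m}$. An interval strictly shorter than $d^{-m}$ therefore cannot contain two distinct grid points $j/d^m$ and $(j+1)/d^m$ in its interior, so it meets at most two consecutive cells; making this precise via a floor function is all that is needed.

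First I would write $I=(\alpha,\beta)$ with $\beta-\alpha<d^{-m}$ and $0\le\alpha<\beta\le 1$ (the closed or half-open cases are identical), and set $a=\floor{\alpha d^m}$. Since $\alpha<1$ we get $0\le a\le d^m-1$ and $a/d^m\le\alpha<(a+1)/d^m$. Then I would bound
\[
\beta=\alpha+(\beta-\alpha)<\frac{a+1}{d^m}+\frac{1}{d^m}=\frac{a+2}{d^m},
\]
which yields the inclusion $I\subseteq\big[a/d^m,(a+2)/d^m\big)$. If $a\le d^m-2$, then $[a/d^m,(a+1)/d^m]$ and $[(a+1)/d^m,(a+2)/d^m]$ are two consecutive $d$-ary intervals of order $m$ and $I$ lies in their union; in the remaining case $a=d^m-1$ we have $I\subseteq[(d^m-1)/d^m,1]$, contained in the single last $d$-ary interval of order $m$. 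Either way the conclusion holds.

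I do not expect any genuine obstacle here: the statement is a one-line pigeonhole argument, and the only thing worth a passing remark is a harmless bookkeeping point — the grid point $(a+1)/d^m$ shared by two consecutive open $d$-ary intervals, together with the degenerate case $a=d^m-1$ — neither of which matters for the construction, since the definition of a $t$-brick already allows $\sigma_d$ to be either a single $d$-ary interval or a union of two consecutive ones.
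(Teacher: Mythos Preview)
Your proof is correct; the paper itself gives no proof of this proposition, calling it a ``trivial fact'' and stating it only because it is invoked in the main lemma. Your floor-function argument is exactly the standard way to make the triviality explicit, and your remark about the shared grid point and the boundary case $a=d^m-1$ correctly identifies the only bookkeeping subtlety, which is indeed irrelevant for the application.
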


\begin{Lemma}[main lemma]\label{lemma:main}
Let $t$  be greater than or equal to $2$, 
let $\epsilon$ be a positive real  less than~$1/t$, 
and let  integer $t'$ be equal to $t$ or to $t+1$.
Then,  any $t$-brick $\vec \sigma=(\sigma_{\cf},\sigma_2,\dotsc,\sigma_t)$ 
admits a refinement $\vec \tau = (\tau_{\cf},\tau_2,\dotsc,\tau_{t'})$
with  discrepancy less than $\epsilon$.
The relative order of $\tau_{\cf}$ might be any $n$ greater than
certain $n_0(t, \epsilon)$.
\end{Lemma}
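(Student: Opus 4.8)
The plan is to build the refinement in two stages: first fix a long \cf-ary block carrying the continued-fraction normality, then fit the $b$-ary coordinates around it. We start from the $t$-brick $\vec\sigma=(\sigma_{\cf},\sigma_2,\dotsc,\sigma_t)$ and choose the relative order $n$ of $\tau_{\cf}$ large. By Lemma~\ref{lemma:cfgrande}, among the \cf-ary subintervals of $\sigma_{\cf}$ of relative order $n$ whose length is comparable to $|\sigma_{\cf}|e^{-2nL}$ (up to the fixed constant factors $e^{\pm 2c}$), the total measure is at least $K|\sigma_{\cf}|/\sqrt n$; this is a sizable family from which we will select $\tau_{\cf}$. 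To ensure continued-fraction normality we must discard, from this family, every $J$ whose new \cf-block $w$ of length $n$ has $\D{\cf-ary}_{v,n}(w)\ge \epsilon-(t-1)/n$ for some block $v$ of $t$ digits each $\le t$. Here the conditioned large-deviation bound, Lemma~\ref{lemma:KPWc}, applied inside $\sigma_{\cf}$ with $\delta$ slightly below $\epsilon$, shows the bad set for a single $v$ has relative measure at most $6Me^{-\delta^2 n/(2M)}|\sigma_{\cf}|$, where $M=M(\delta,t)$; since there are at most $t^t$ relevant blocks $v$, the union of bad sets has relative measure $\le t^t\,6Me^{-\delta^2 n/(2M)}$. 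For $n$ past a threshold $n_0(t,\epsilon)$ this is far below $K/\sqrt n$, so a positive-measure set of admissible $\tau_{\cf}$ survives; fix one.

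Now I would install the integer bases one at a time for $d=2,\dotsc,t'$, maintaining the invariant that after handling base $d$ we have a $d$-brick refining the part processed so far and still containing a \cf-ary interval of the same relative order $n$ as $\tau_{\cf}$ in its \cf-coordinate. The $b$-ary side is where Proposition~\ref{prop:ladrillo} and Lemma~\ref{lemma:BHS} do the work. Given the current \cf-ary interval of order $n$, its length is at most $e^{-2nL+O(1)}|\sigma_{\cf}|$; pick $m=m(n)$ roughly $2nL/\log d$ so that this length is below $d^{-m}$, so by Proposition~\ref{prop:ladrillo} the interval lies in a $d$-ary interval (or a union of two consecutive ones) $\sigma'_d$ of order $m$, and by the brick condition $|\sigma_{\cf}'|/|\sigma_d'|\ge d^{-m}\cdot(\text{bounded})\ge 1/(16e^{4c}d)$ once the constants are lined up. The new $d$-block $w$ attached by $\tau_{\cf}\subset\sigma'_d$ has length at least $m$ minus the order of $\sigma_d$, which grows with $n$; among all $d$-ary subintervals of $\sigma'_d$ of that relative order, Lemma~\ref{lemma:BHS} says the fraction with simple $d$-ary discrepancy $\ge\epsilon$ is at most $2d\,e^{-d\epsilon^2 k/6}$ with $k$ the new-block length, which is $<1$ for $k$ (hence $n$) large. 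So a good $d$-ary refinement exists; intersecting with it shrinks $\tau_{\cf}$'s candidate set but — and this is the delicate bookkeeping — only finitely many times (once per base $d\le t'$), so choosing $n_0(t,\epsilon)$ large enough that every one of these finitely many ``good set is nonempty'' conditions holds simultaneously, we can carry out all selections. After processing base $t'$ we read off $\tau_{\cf}$ as the surviving \cf-ary interval and $\tau_d=\sigma'_d$ for each $d$, and the two discrepancy conditions in the definition of a refinement with discrepancy $<\epsilon$ hold by construction.

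The remaining point is to verify that $\vec\tau$ is genuinely a $t'$-brick, i.e.\ the length ratio $|\tau_{\cf}|/|\tau_d|\ge 1/(16e^{4c}d)$: this follows because $|\tau_{\cf}|\ge \tfrac14|\sigma_{\cf}|e^{-2nL-2c}$ from Lemma~\ref{lemma:cfgrande}, while $|\tau_d|=d^{-m}$ with $d^{-m}$ chosen just below the length of the order-$n$ \cf-ary interval housing $\tau_{\cf}$, so $d^{-m}\le |{\cdot}|\le |\sigma_{\cf}|e^{-2nL+2c}$ — combined with the incoming ratio $|\sigma_{\cf}|/|\sigma_d|\ge 1/(16e^{4c}d)$ and a factor-of-$d$ slack, the constant $16e^{4c}$ is exactly what absorbs the $e^{4c}$ gap and the factor $4$ from the lower bound on $|\tau_{\cf}|$. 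The main obstacle, and where I would spend the most care, is this simultaneous-threshold argument: each base $d$ removes a chunk of the $\tau_{\cf}$-candidate family and imposes its own lower bound on $n$, and one must check that these finitely many constraints are compatible with keeping a positive-measure supply of admissible $\tau_{\cf}$ all the way through — in particular that the total measure removed by all the $b$-ary constraints plus the \cf-ary bad set stays below the $K|\sigma_{\cf}|/\sqrt n$ guaranteed by Lemma~\ref{lemma:cfgrande}. Getting the constant $16e^{4c}$ and the function $n_0(t,\epsilon)$ to come out right is routine once the inequalities are assembled, but it is the part that has to be done honestly.
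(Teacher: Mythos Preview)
Your strategy matches the paper's: work inside the family $\mathcal J_n$ of \cf-ary subintervals of $\sigma_{\cf}$ of relative order $n$ whose length lies in the range of Lemma~\ref{lemma:cfgrande} (so $|\bigcup\mathcal J_n|\ge K|\sigma_{\cf}|/\sqrt n$), excise the bad zones coming from the continued-fraction constraint (Lemma~\ref{lemma:KPWc}) and from each base-$d$ constraint (Lemma~\ref{lemma:BHS}), and pick $\tau_{\cf}$ in what remains. Your last paragraph states exactly this.

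But the body of the argument has a real slip. At the end of your first paragraph you write ``fix one'' $\tau_{\cf}$ after imposing only the \cf-discrepancy constraint. Once $\tau_{\cf}$ is fixed, the $d$-ary cover $\tau_d$ of order $m_d$ is \emph{determined}, and nothing then forces the block from $\sigma_d$ to $\tau_d$ to have small discrepancy. In your second paragraph you oscillate between treating $\tau_{\cf}$ as already fixed (``given the current \cf-ary interval'') and as still varying (``shrinks $\tau_{\cf}$'s candidate set''). The clean resolution, which the paper carries out, is not to fix $\tau_{\cf}$ until \emph{all} bad zones have been removed: for each $d\le t$ one defines $B_{d,\sigma_d,m_d,\epsilon}\subset\sigma_d$ as (a one-neighbour thickening of) the union of order-$m_d$ $d$-ary subintervals of $\sigma_d$ whose new block has discrepancy $\ge\epsilon$; the key observation is that $m_d$ depends only on $n$, not on which $J\in\mathcal J_n$, so these bad zones are defined uniformly, and one bounds $\sum_{d=2}^t |B_d|/|\sigma_{\cf}| + |\tilde B|/|\sigma_{\cf}| < K/\sqrt n$ in one shot and only then selects $J=\tau_{\cf}$. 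The ``one base at a time'' framing is unnecessary and is what produces the inconsistency. Two smaller points: ``subintervals of $\sigma'_d$'' should read ``subintervals of $\sigma_d$'' (the discrepancy of the refinement is measured by the block leading from $\sigma_d$ down to $\tau_d$), and the case $t'=t+1$ carries no discrepancy requirement for $d=t+1$ since there is no $\sigma_{t+1}$; the paper simply covers the already-chosen $\tau_{\cf}$ by a $(t+1)$-ary interval of suitable order at the end.
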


\begin{proof} 
First, assume $t'=t$.
\smallskip

{\em \  Towards the length of $\tau_{\cf}$.} For each $n$ consider $\cc I_n(\sigma_{\cf})$ the \cf-ary subintervals of 
$\sigma_{\cf}$ of relative order $n$.
 Let $K, c, n_1$ be the constants provided by lemma~\ref{lemma:cfgrande}.
 Call $\mathcal J_n(\sigma_{\cf})$ the collection of intervals 
$J \in \cc I_n(\sigma_{\cf})$ such that 
\[
  \frac{1}{4} e^{-2nL-2c} \leq \frac{|J|}{|\sigma_{\cf}|}\leq 2\ e^{-2nL+2c}.
\]
 If $n \geq n_1$, lemma~\ref{lemma:cfgrande} asserts
\[
\frac{  \left|\bigcup_{J \in \mathcal J_n}J  \right|}{|\sigma_{\cf}|} \geq \frac{K}{\sqrt{n}}.
\]
At the end of the proof we will determine a value for $n$ and we will choose 
 $\tau_{\cf}$ as one of these intervals $J$ in $\cc J_n(\sigma_{cf})$. 
\smallskip

{\em Towards the length of  $\tau_d$.}
 For each $n$ and for  each $d=2, \ldots, t$  we call 
\[
m_d = order_d(\tau_d).
\] 
We choose $m_d$ as the largest
integer such that $2e^{-2nL+2c} |\sigma_{cf}| \leq d^{-m_d}$. This choice guarantees
$|J| \leq d^{-m_d}$ for every $J \in \mathcal J_n$ and also
 \[
d^{-m_d-1} \leq 2e^{-2nL+2c} |\sigma_{cf}| = 
8e^{4c} \frac{1}{4} e^{-2nL-2c} |\sigma_{cf}| 
\leq 8e^{4c} |J|.
\]
For each $J \in \mathcal J_n$  we determine $\tau_d^J$ as the 
 $d$-ary interval of order $m_d$ 
 or the union of two consecutive $d$-ary intervals of order $m_d$
that contain $J$ (Proposition~\ref{prop:ladrillo}).  Thus,
\[
\frac{|J|}{|\tau_d^J|} \geq \frac{1}{16e^{4c}d}.
\]
This choice of $m_d$ imposes bounds on $n_d = order(\tau_d) - order(\sigma_d)$
 which only depend on $n$ and $d$, as follows:
\begin{enumerate}

\item Since\ \  $|\sigma_{cf}| \leq |\sigma_d| \leq |\sigma_{cf}| 16e^{4c} d$\ \
then
\[
 \log_d\left(|\sigma_{cf}|/2\right) \leq -order(\sigma_d) \leq \log_d(|\sigma_{cf}| 16e^{4c} d).
\]
Notice  that  $order(\sigma_d)= -\log_d\left(|\sigma_d|/2\right)$ or 
 $order(\sigma_d)= -\log_d(|\sigma_d|)$.

\item And  since \ \
$ 2e^{-2nL+2c} |\sigma_{cf}| \leq d^{-m_d} \leq d \ 2e^{-2nL+2c} |\sigma_{cf}|$\ \ then
\[
 \log_d(2e^{-2nL+2c} |\sigma_{cf}|) \leq -order(\tau_d)=-m_d \leq \log_d(d \ 2e^{-2nL+2c}|\sigma_{cf}|).
\]
\end{enumerate}
We obtain,
\[
 2nL \log_d e - \log_d(4 d e^{2c}) \leq order(\tau_d) - order(\sigma_d) \leq 2nL \log_d e + \log_d(8 e^{2c}d)  
\]
Then, since   $n_d=order(\tau_d) - order(\sigma_d)$,
\[
 2n \frac{L}{\log d} - \frac{2c}{\log d} - 3 \leq n_d \leq 2n \frac{L}{\log d} + \frac{2c}{\log d} + 4.
\]

{\em \ Bad zones.}  
We must pick one interval $J $ in $\mathcal J_n$ in a zone of low discrepancy. 
This is possible because  the measure of the   zones of large discrepancy 
decrease at an exponential rate in~$n$ 
while the measure of $\mathcal J_n$ decreases only as 
 $K / \sqrt{n}$. 

For each $n$ let   
\[
B^ 0_{d, \sigma_d, m_d, \epsilon}
\] 
be the set of reals in the  $d$-ary subintervals of $\sigma_d$ of order $m_d$
  with $d$-discrepancy greater than~$\epsilon$. And let
\[
B_{d, \sigma_d, m_d, \epsilon}
\]  
be the union of $B^0_{d, \sigma_d, m_d, \epsilon}$ with those numbers
lying in a $d$-ary interval of the same order that is a neighbour to one in
$B^ 0_{d, \sigma_d, m_d, \epsilon}$.

With the conditions $6/n_d \leq \epsilon \leq 1/d$, Lemma~\ref{lemma:BHS}
gives the estimate
\[
 \frac{|B_{d,\sigma_d,m_d,\epsilon}|}{|\sigma_d|} \leq 6d e^{-d \epsilon^2 n_d / 6}.
\]
Since $n_d \geq 2n \frac{L}{\log d} - \frac{2c}{\log d} - 3$,
and $|\sigma_d| \leq 16 e^{4c} d |\sigma_{\cf}|$, we have
 \begin{align*}
 \frac{|B_{d,\sigma_d,m_d,\epsilon}|}{|\sigma_{\cf}| } 
&\leq\
16 e^{4c} d \frac{|B_{d,\sigma_d,m_d,\epsilon}|}{|\sigma_d|} 
\\& \leq \ 96 \ e^{4c} d^2 e^{-d \epsilon^2 n_d / 6} 
\\&\leq A(d) e^{-d \epsilon^2 L n / (3 \log d)}
\end{align*}
where
$A(d)=\ 96 \ e^{4c} d^2    e^{d\epsilon^2( c/(3\log d)+1/2)} $.

For each $n$, let   
\[
\tilde B_{t, \sigma_{\cf}, n, \epsilon}
\]
be the set of reals $x$ in the \cf-ary subintervals of $\sigma_{\cf}$ 
of relative order $n$ 
such that for some block  of length $t$ of digits less than or equal to $t$
the \cf-discrepancy of $x$ is  greater than $\epsilon-(t-1)/n$.
With the condition   $2(t-1)/\epsilon  \leq n$,
it suffices to consider  
\cf-discrepancy   greater than $\epsilon/2$.
Lemma~\ref{lemma:KPWc} gives the estimate, 
 \[
\frac{|\tilde B_{t, \sigma_{\cf}, n, \epsilon}|}{|\sigma_{\cf}|} 
\leq t^t 6Me^{-\frac{(\epsilon/2)^2 n}{2M}},
\]
 where $M = \left\lceil t - \log\left( \frac{(\epsilon/2)^2}{2\log 2}\right) \right\rceil$ or larger.

{\em \ Find $n_0$ large enough.}
Now we choose $n_0$ such that for $n \geq n_0$ 
the bad zones are smaller than the measure of the union of $\cc J_n$.
We need a value of $n$ such that  

\begin{align*}
6M t^t e^{-\frac{(\epsilon/2)^2 n}{2M}} 
&\leq \ \frac{K}{t\sqrt{n}}, 
\qquad \mbox{and for $d=2, \dotsc, t$},
\\
A(d)  e^{-d \epsilon^2 L n / (3 \log d)}
& \leq \ \frac{K}{t \sqrt{n}}.
\end{align*}

Hence, we need to find  solutions to 
 \[
\sqrt{n} e^{-rn} \leq \gamma
 \]
for certain values of $r$  and $\gamma$.
Since for every positive $x$,  it holds that  $x < e^{x/2}$, we have
 \[
\sqrt{n} e^{-r n/2} \leq \frac{1}{r} r\ n\ e^{-rn/2} \ <
 \frac{1}{r}\ e^{r n/2 - r n/2} = \frac{1}{r}.
\]
Thus, we need $n_0$ such that 
\[
e^{-r n/2} \leq \gamma r.
\] 
for each of the needed values $r$ and $\gamma$.
Hence, $n_0$ has to be as large as
\[
 -2/ r \ \log(\gamma r).
\]
for each of the needed values $r$ and $\gamma$.

Letting
\[
\begin{array}{ll}
r^{(1)}=\varepsilon^2/(8M), & \gamma^{(1)}=K/(6 M t^{t+1}),\qquad 
\text{and for $d=2, \dotsc, t$},
\\
r^{(d)}=d \varepsilon^2 L/(3\log d), & \gamma^{(d)}=K/(t\  A(d)).
\end{array}
\]
we have to take  
\[
n_0 = \max \left\{   -2/ r^{(d)} \log\left(\gamma^{(d)} r^{(d)}\right): 1\leq d\leq t\right\}
\cup \left\{\frac{6}{\epsilon},\frac{2(t-1)}{\epsilon},n_1 \right\}
\]
This completes the proof in case $t'=t$.

\medskip

The case $t'=t+1$ follows easily by taking first
a $t$-brick $\vec \tau$ refining $\vec \sigma$ with discrepancy less than $\epsilon$.
Then we only need to take $\tau_{t+1}$ a $(t+1)$-ary interval 
of order $m_{t+1}$, or a union of two consecutive such intervals so that 
$\tau_{cf} \subset \tau_{t+1}$ and $\tau_{t+1}$ not very large. 
For instance, we can take  $m_{t+1}$ to be the maximum such that 
$|\tau_{cf}| \leq (t+1)^{-m_{t+1}}$, so that applying 
again Proposition \ref{prop:ladrillo},
\[
\frac{|\tau_{cf}|}{|\tau_{t+1}|} \geq \frac{1}{2(t+1)}.\qedhere
\]
\end{proof}

\subsection{Algorithm}

 The algorithm constructs a sequence of  $t$-bricks
$\vec{\sigma}_1, \vec{\sigma}_2, \vec{\sigma}_3,\ldots$
for non-decreasing values of~$t$.
The real number defined by the intersection of all the 
 intervals in the sequence is absolutely normal and  
continued fraction normal.

We consider the  block length $t$, the discrepancy value $\epsilon$ 
and the relative order $n$ of the new \cf-ary interval
as functions of the step $s$.
Define for every positive $s$
\begin{align*}
t(s)&=\max(2, \lfloor \sqrt[5]{ \log s} \rfloor),
\\
\epsilon(s)&=\ 1/t(s).
\end{align*}
Clearly $t(s)$ is non-decreasing unbounded 
and $\epsilon(s)$ is  non-increasing and  goes to zero.
Now consider the function  $n_0\big(\epsilon(s), t(s)\big)$ given by Lemma~\ref{lemma:main} 
and notice that 
\[
n_0\big(\epsilon(s), t(s)\big) \mbox{ is in } O\big(t(s)^4 \log (t(s))\big).
\]
Let $n_{\start}$ be the minimum positive integer such that for every positive $s$
\[
\lfloor \log s \rfloor + n_{\start} \geq n_0(\epsilon(s), t(s))
\]
and define
\[
n (s)= \lfloor \log s \rfloor + n_{\start}.
\]
The algorithm is as follows.
\begin{description}
\item[\textnormal {\em Initial step, $s=1$}.]  
Let  $\vec{\sigma}_1=( \sigma_{\cf}, \sigma_2)$, 
with  $\sigma_2=\sigma_{\cf}=(0,1)$.
 
\item[\textnormal {\em Recursive step, $s>1$}.] 
Assume $\vec{\sigma}_{s-1}=(\sigma_{\cf }, \sigma_2,\dotsc,\sigma_{t(s-1)} ) $.
Take $\vec{\sigma}_s =(\tau_{\cf},\tau_2,\dotsc,\tau_{t(s)})$  
the leftmost refinement  of $\vec{\sigma}_{s-1}$  with discrepancy less than 
$\epsilon(s)$ 
and such that  the order of $\tau_{\cf}$ is $n(s)$ plus the order of $\sigma_{\cf}$.
\end{description}

\subsection{Correctness}

The existence of the sequence $\vec{\sigma}_1, \vec{\sigma}_2, \ldots$
is guaranteed by Lemma~\ref{lemma:main}.
We have to prove that the real number $x$ defined by the intersection of all the 
 intervals in the  sequence is absolutely normal and continued fraction normal.

We first show  that $x$ has a normal continued fraction expansion.
Let $v$ be a block of  $m$ integers $v_1,\dotsc,v_m$ 
and let $\tilde \epsilon > 0$.
Choose $s_0$ so that 
$m\leq t(s_0)$, $\max\{v_1, \dotsc,v_m\}\leq t(s_0)$ and  $\epsilon(s_0) \leq \tilde \epsilon / 4$.
At each step $s$ after $s_0$, 
the continued fraction expansion of  $x$ 
is constructed  by appending a block $u_s$ 
such that $|u_s| = n(s)$ and
\[
\D{\cf-ary}_{v,|u_s|}(u_s) \ < \ 
\epsilon(s) - \frac{t(s-1)-1}{|u_s|} \ < \ \epsilon(s) - \frac{m-1}{|u_s|}.
\]
  By Lemma~\ref{lemma:Dcf} (item 1) applied many times, for every $s \geq s_0$:
\[
\D{\cf-ary}_{v,|u_{s_0} \ldots u_s|}(u_{s_0}u_{s_0+1} \ldots u_s) < \epsilon(s_0).
\]
  Next, by Lemma~\ref{lemma:Dcf} (item 2b) there is $s_1$ sufficiently large such
  that for every $s \geq s_1$,
 \[
\D{\cf-ary}_{v,|u_1 \ldots u_s|}(u_1 \ldots u_s) < 2\epsilon(s_0).
\]  
  Since $n(s)$ grows logarithmically, the inequality
\[
n(s)\leq 2\epsilon(s_0)\sum_{j=1}^{s-1} n(j)
\]
  holds from certain point on.
  Hence, by Lemma~\ref{lemma:Dcf} (item 2a), we have for every $s$ sufficiently large and 
for every $\ell$ such that  $|u_1 \ldots u_{s-1}| < \ell \leq |u_1\ldots u_s|$,
  \[
   \D{\cf-ary}_{v,\ell}(u_1 \ldots u_s) < 4 \epsilon(s_0) < \tilde \epsilon.
  \]\nopagebreak
  It follows that  $x$ is  continued fraction normal.
  
The argument to show that $x$ is absolutely normal is very similar.
We pick a base $d$ and show that $x$ is simply normal to base $d$.
Let $\tilde \epsilon > 0$. Choose $s_0$ so that
$t(s_0) \geq d$ and $\epsilon(s_0)\leq \tilde \epsilon / 4$.
At each step $s$ after $s_0$
the expansion of  $x$ in base $d$ was constructed by appending blocks $u_s$ 
such that $\D{d-ary}_{|u_s|}(u_s)< \epsilon(s_0)$.
 Thus, by Lemma~\ref{lemma:D} (item 1) for any $s > s_0$,
\[
\D{d-ary}_{|u_{s_0} \ldots u_s|}(u_{s_0} \ldots u_s) < \epsilon(s_0). 
\]
  Applying Lemma~\ref{lemma:D} (item 2a), we obtain $s_1$ such that 
  for any $s > s_1$
\[
\D{d-ary}_{|u_1 \ldots u_s|}(u_1 \ldots u_s) < 2\epsilon(s_0).
\]
Call $n_d(j)$ the relative order of the $d$-interval of $\vec \sigma_{n(j)}$
with respect to the $d$-interval of $\vec \sigma_{n(j-1)}$. 
The inequalities 
\[ 
2n(j) \frac{L}{\log d} - \frac{2c}{\log d} - 3 
\leq n_d(j) \leq 2n(j) \frac{L}{\log d} + \frac{2c}{\log d} + 4 
\]
provided by the proof of Lemma~\ref{lemma:main}, tell us that
$n_d(j)$ grows logarithmically.

\noindent
Thus, for $s$ sufficiently large we have
 \[
 n_d(s) \leq 2\epsilon(s_0) \sum_{j=1}^{s-1}  n_d(j).
 \]
 By Lemma~\ref{lemma:D} (item 2b) we conclude that for $s$ sufficiently large
 and $|u_1 \ldots  u_{s-1}| \leq \ell \leq |u_1 \ldots  u_s|$, 
\[
\D{d-ary}_{\ell}(u_1 \ldots  u_s)
<\ 4 \epsilon(s_0) < \tilde \epsilon.
\]
So, $x$ is simply normal to base $d$ for every $d \geq 2$.

\subsection{Computational complexity}

We analyse the computational complexity of the algorithm described in the previous section
by counting  the number of mathematical operations  required to output the first $k$ digits of 
the continued fraction expansion of the computed number.
We would obtain an equivalent outcome 
if we counted the number of mathematical operations  required to output 
the first $k$ digits of the expansion of the computed number in some prescribed base.
Here we  do not count how many elementary operations are implied by each of the 
 mathematical operations,
which means that we neglect the computational cost of performing arithmetical 
operations with  arbitrary precision.
\smallskip

{\em Memory assumptions at step $s$.}
Let $N(s)=\sum_{i=1}^s n(i)$.
At the beginning of step $s$ the current $t$-brick is 
 $\vec{\sigma}_{s-1}=(\sigma_{\cf}, \sigma_2, \dotsc, \sigma_{t(s-1)})$.
Let  $z_{\cf}$ be the left endpoint of  $\sigma_{\cf}$
and let   $[a_1, \dotsc, a_{N(s-1)}]$ be is continued fraction expansion.
Let $z_d$ be the left endpoint of $\sigma_d$.
We assume that at step $s$ the algorithm 
has direct access to the  following values:
\begin{enumerate}  
\item the approximant  $p_{N(s-1)}/q_{N(s-1)}$  of $[a_1, \dotsc, a_{N(s-1)}]$.
In this way, the algorithm has access to the value  $z_{\cf}$,

\item the value  $|\sigma_{\cf}|$,

\item  the values $z_{\cf} - z_d$, for $d=2, \dotsc, t(s)$
\end{enumerate}
We do not require direct access to any other values.
\smallskip

{\em How to pick a $t$-brick in the good zone at step $s$.}
Lemma~\ref{lemma:main} ensures the existence of the wanted $t$-brick .
To  effectively find it we proceed as follows.
Divide the interval $\sigma_{\cf}$ into 
\[
\lfloor 4 e^{2n(s)L + 2c} \rfloor + 1
\]
 equal intervals. 
Notice that every interval contained in $\sigma_{\cf}$
 whose length is at least 
\[
\frac{1}{4}e^{-2n(s) L-2c} |\sigma_{\cf}|
\]
 will contain as interior point an endpoint of these equal intervals.
 For each endpoint determine if it 
 belongs to a  \cf-ary subinterval $J_{\cf}$ of $\sigma_{\cf}$ of relative order
 $n(s)$ whose length  is between 
\[
 \frac{1}{4}e^{-2n(s) L-2c} |\sigma_{\cf}| \ \mbox{ and } \ 2e^{2n(s) L+2c} |\sigma_{\cf}|.
\] 
In case it does, determine if the   
corresponding $t$-brick $\vec{J}=(J_{\cf}, J_2, \dotsc, J_{t(s)})$ 
refines $\vec{\sigma}_{s-1}$ with  discrepancy less than $\epsilon$.
For this we need to determine the blocks 
that lead from the intervals  $\sigma_{\cf}, \sigma_2, \ldots, \sigma_{t(s-1)}$
to the intervals $J_{\cf}, J_2, \dotsc, J_{t(s-1)}$.
Thus, given $\sigma_{cf}$,  we need
to determine the block of $n(s)$ many digits that leads to $J_{\cf}$.
And for $d=2, \dotsc,t(s-1)$,  given $\sigma_d$, we need to determine
the block of $n_d=order(J_d )-order(\sigma_d)$ many digits in base $d$ that leads to $J_d$.

\smallskip

{\em An upper bound for the number of  mathematical operations at step $s$.}
In the worst case, to find a wanted $t$-brick we have to inspect all the candidate endpoints.
Since $n(s)=\lfloor \log s\rfloor + n_{\start}$,
the total number $T$ of candidate endpoints is
\[
\left\lfloor 4 e^{2 (\lfloor \log s\rfloor + n_{\start})  L+2c} \right\rfloor.
\]
Thus, the number of endpoints is in
\[
O\left(s^{2L}\right).
\]
Let  $e_0, \ldots e_{T-1}$  be these endpoints. 
We  write each endpoint $e_j$, for $ j=0, \dotsc, T-1$, as
\[
e_j =z_{\cf} +   |\sigma_{\cf}|  \ j/T.
\]
Let  $u, v$ be integers such that $  |\sigma_{\cf}| \ j/T=u/v$.
Then the continued fraction expansion of   $e_j$ can be written as
$[a_1, \dotsc, a_{N(s-1)}]$ 
concatenated with  the continued fraction expansion  of~$u/v$.
We only need $n(s)$ many digits of continued fraction 
expansion of~$u/v$ that we can obtain by running 
the Euclidean algorithm on  $(u,v)$ for $n(s)$ iterations.
This  gives a number of mathematical operations in
\[
O(n(s)).
\]
Let $J_{\cf}$ be the  \cf-ary subinterval of $\sigma_{\cf}$ of relative order $n(s)$.
The computation of its length requires computing the convergents 
$q_{N(s-1)+1}, \dotsc, q_{N(s-1) +n(s)}$. Thus checking that
the length is suitable 
requires a number of mathematical operations in
\[
O(n(s)).
\]
Now we  write each endpoint $e_j$, for $ j=0, \dotsc, T-1$, as
\[
e_j = (z_{\cf} - z_d) + z_d +  |\sigma_{\cf}| \ j/T.
\]
Then, the base-$d$ expansion of $e_j$ 
consists of the base-$d$ expansion of $z_d$
followed by the  base $d$-expansion of 
$(z_{\cf} - z_d) +   |\sigma_{\cf}| \ j/T$.
By the proof of Lemma  \ref{lemma:main}, for each base $d$,
 we just need  $n_d$  many digits of this expansion
and $n_d$  is $ O(n(s))$.
The conversion of the rational value  $(z_{\cf} - z_d) +   |\sigma_{\cf}| \ j/T$
to base $d$ can be done by a constant number of mathematical operations.

Finally, we need to  check if the discrepancy  of each of the $t$ blocks witnessed by $e_j$
 is less than~$\epsilon(s)$. This can be done by a number of comparisons that 
is linear in the length of the block plus a constant number of operations, hence in 
\[
O(n(s)).
\]
We conclude that at step $s$ in the worst case the number of required mathematical operations to 
choose $\vec{\sigma}_s$ can be bounded as
\[
O\left(T  \  \big(n(s) + n(s)+  t(s)\ constant + t(s)\ n(s) \big)\right).
\]
Since $T$ is in $O(s^{2L})$,  $n(s)$ is in  $O(\log( s))$, and $t(s)$ is in $O(\log^{1/5}(s))$,
the total number of mathematical operations at step $s$ is
\[
O\left(s^{2L}  \log^{6/5}(s)\right).
\]

{\em Number of mathematical operations to compute the first $k$ digits.}
After the first  $k$ steps the 
number of digits  of the continued fraction expansion of $x$
obtained by the  algorithm is $N(k)$, which is greater than $k$.
While the  number of mathematical operations performed by the algorithm 
 is in the order of
\[
\sum_{s=1}^{k} s^{2L}  \log^{6/5}(s) \text{ which is less than } k^{2L+1}\log^{6/5} (k),
\]
and this last expesion is in  $ O(k^4)$. 
This completes the proof of Theorem~\ref{thm}.
\bigskip
\bigskip

\noindent
{\bf Acknowledgments.}
We are thankful to Brigitte Vall\'ee  for pointing  to us 
the optimal version of the Central Limit Theorem that establishes that  the 
distribution of the   logarithm of the convergents 
of finite continued fractions is asymptotically Gaussian,  as well as 
 its implication on the length of  \cf-intervals.
Both authors are supported by grant PICT 2014-3260 by Agencia Nacional de 
Promoci\'on Cient\'ifica y Tecnol\'ogica, Argentina.
Becher is  a member of Laboratoire International Associ\'e INFINIS, 
Universit\'e Paris Diderot-CNRS / Universidad de Buenos Aires-CONICET).
Becher worked towards this paper at the  
Erwin Schr\"odinger International Institute for Mathematics and Physics, Austria.
\bigskip

\bibliographystyle{plain}
\bibliography{cf-abs-normal}
\bigskip

\begin{minipage}{\textwidth}
Ver\'onica Becher
\\
Departmento de  Computaci\'on,   Facultad de Ciencias Exactas y Naturales
\\
Universidad de Buenos Aires \& ICC CONICET, Argentina.
\\
{\tt vbecher@dc.uba.ar}
\medskip\\
Sergio Yuhjtman
\\
Departmento de  Matem\'atica, Facultad de Ciencias Exactas y Naturales\\
Universidad de Buenos Aires, Argentina.
\\
{\tt  syuhjtma@dm.uba.ar}
\end{minipage}

\end{document}